\documentclass[oneside,11pt,a4paper,reqno,ps]{article}
\usepackage[a4paper,top=5cm,bottom=5cm,left=3cm,right=3cm,
bindingoffset=5mm]{geometry}
\pagestyle{myheadings}
\usepackage[latin1]{inputenc}
\usepackage[english]{babel}
\usepackage[T1]{fontenc}
\usepackage{syntonly}
\usepackage{graphicx}
\usepackage{amssymb}	
\usepackage{amsthm}
\usepackage{microtype}
\usepackage{amsmath}
\usepackage{braket}
\usepackage{eurosym}
\usepackage{amssymb}
\usepackage{amstext}
\usepackage{color}
\usepackage{amsmath,amsthm,amssymb,amscd,epsfig,esint, mathrsfs,graphics,color}
 \usepackage{wrapfig}
\usepackage{verbatim}
\setcounter{MaxMatrixCols}{10}
\setlength\parindent{0pt}
\newcommand{\numberset}{\mathbb}
\newcommand{\R}{\numberset{R}}
\newcommand{\N}{\numberset{N}}

\theoremstyle{plain}
\newtheorem{thm}{Theorem}[section]

\newtheorem{lemma}[thm]{Lemma}

\theoremstyle{definition}

\def\Xint#1{\mathchoice 
	{\XXint\displaystyle\textstyle{#1}}%
	{\XXint\textstyle\scriptstyle{#1}}%
	{\XXint\scriptstyle\scriptscriptstyle{#1}}%
	{\XXint\scriptscriptstyle\scriptscriptstyle{#1}}%
	\!\int} 
\def\XXint#1#2#3{{\setbox0=\hbox{$#1{#2#3}{\int}$} 
		\vcenter{\hbox{$#2#3$}}\kern-.5\wd0}} 
\def\Mint{\Xint -}


\def\R{\mathbb{R}}
\numberwithin{equation}{section} \makeatletter

\renewcommand{\p@enumi}{\thesection.}
\title{\textbf{Regularity for minimizers of a class of non-autonomous functionals with sub-quadratic growth}}
\author{Andrea Gentile }

\begin{document}
	
	\maketitle
	
\begin{abstract}
	\noindent We consider functionals of
	the form
	$$
	\mathcal{F}( v, \Omega )= \int_{\Omega} \! f(x,Dv(x)) \, dx,
	$$
	with convex integrand  with respect
	to the gradient variable, assuming that the function that measures
	the oscillation of the integrand with respect to the $x$ variable
	belongs to a suitable Sobolev space $W^{1,q}$. \\
	We prove a result of higer differentiability for the minimizers.
	We also infer a result of Lipschitz regularity of minimizers if $q>n$, and a result of higher integrability for the gradient if $q=n$.
	\noindent The novelty here is that we deal with integrands satisfying subquadratic growth conditions with respect to gradient variable.
\end{abstract}

\noindent {\footnotesize {\bf AMS Classifications.}   49N60; 
	35J60; 49N99.}

\bigskip

\noindent {\footnotesize {\bf Key words and phrases.}  Local minimizers; Lipschitz regularity; Higher Integrability; Higher Differentiability; Sobolev coefficients.}
\bigskip

\section{Introduction}
In this paper, we consider integral functionals of the type

\begin{equation}\label{functional}
\mathcal{F}(v, \Omega)=\int_{\Omega}f\left(x, Dv(x)\right)dx,
\end{equation}

where $\Omega\subset\R^n$ is a bounded open set,  $f:\Omega\times\R^{N\times n}\to\R$ is a Carath\'{e}odory map, such that $\xi\mapsto f(x, \xi)$ is of class $C^2(\R^{N\times n})$ for a.e. $x\in\Omega$, and for an exponent $p\in(1, 2)$ and constants  $\ell_1, \ell_2>0, L_1, L_2\ge0,$ and a parameter $\mu\ge0$ the following conditions are satisfied:

\begin{equation}\label{f1}
\ell_1(\mu^2+|\xi|^2)^\frac{p}{2}\le f(x, \xi)\le \ell_2(\mu^2+|\xi|^2)^\frac{p}{2},
\end{equation}

\begin{equation}\label{SecondDerivativeGrowth}
L_1\left(\mu^2+\left|\xi\right|^2\right)^\frac{p-2}{2}\left|\eta\right|^2\le\left<D_{\xi\xi}f(x, \xi)\eta, \eta\right>\le L_2\left(\mu^2+\left|\xi\right|^2\right)^\frac{p-2}{2}\left|\eta\right|^2.
\end{equation}

for almost every $x$ in $\Omega$, and for all $\xi, \eta$ in $\R^{N\times n}$.
For what concerns the dependence of the energy density on the $x$-variable, we shall assume that the function $D_\xi f(x, \xi)$ is weakly differentiable with respect to $x$ and that $D_x(D_\xi f) \in L^q(\Omega\times\R^{N\times n})$, for some $q\ge n$.\\
This is equivalent to assume that there exists a nonnegative  function $g\in L^q_\text{loc}(\Omega)$ such that

\begin{equation}\label{f4}
\left|D_x\left(D_\xi f(x, \xi)\right)\right|\le g(x)\left(\mu^2+|\xi|^2\right)^\frac{p-1}{2}
\end{equation} 

for  all $\xi\in\R^{N\times n}$ and for almost every $x\in \Omega$.\\
In order to avoid the irregularity phenomena that are peculiar of the vectorial minimizers (see \cite{DeGiorgi}, \cite{Sverak-Yan}), we shall assume that 

\begin{equation}\label{radial}
f(x, \xi)=k(x, \left|\xi\right|)
\end{equation}

with

\begin{equation}\label{xidependence}
k(x, \cdot)\in C^2(\R) \text{ if }\mu>0\qquad\text{or}\qquad k(x, \cdot)\in C^2(\R\setminus\set{0})\text{ if }\mu=0,
\end{equation}

for almost every $x \in \Omega$.\\
\noindent The regularity properties of minimizers of such integral
functionals   have been widely
investigated in case the energy density $f(x,\xi)$ is continuous as a function of the $x$-variable, both in the superquadratic and in the subquadratic growth case. Actually, the partial continuity of the vectorial
minimizers can be obtained with a quantitative modulus of continuity
that depends on the modulus of continuity of the coefficients (see
for example \cite{AF, fh, gm} and the monographs \cite{19,23}
for a more exhaustive treatment). For regularity results under general growth conditions, that of course include the superquadratic and  the subquadratic ones, we refer to \cite{diestrver09,diestrver11, FuscoSbordone, LeonettiMascoloSiepe}.

Recently, there has been an increasing interest in the study of the
regularity when the oscillation of $f(x,\xi)$ with respect to the $x$-variable is controlled through a coefficient that belongs to a suitable Sobolev class of integer or fractional order and the assumptions \eqref{f1}--\eqref{f4} are satisfied with an exponent $p\ge 2$.\\

Actually, it has been shown that the weak
differentiability of the partial map $x\mapsto f(x,\xi)$ transfers to the
gradient of the minimizers of the functional \eqref{functional} (see
\cite{5,EleMarMas, EleMarMas2, GP, KristensenMingione,32}) as well as to the gradient  of the
solutions of non linear elliptic systems (see \cite{2,6,CGP,CuMR,Gio,KuusiMingione,33}) and of non linear systems with degenerate ellipticity in case $p\ge2$. (see \cite{Gio}).\\
It is worth mentioning that the continuity of the coefficients is not sufficient to establish the higher differentiability of integer order of the minimizers.

As far as we know, no regularity results are available for vectorial minimizers nor to establish their Lipschitz continuity under the so-called subquadratic growth conditions, i.e. when the  assumptions \eqref{f1}--\eqref{f4} hold true for an exponent $1<p\le 2$ in case of Sobolev coefficients.\\
The aim of this paper is to prove that, assuming $g\in L^q_{\mathrm{loc}}(\Omega)$, with $q\ge n$, any local minimizer $u\in W^{1, p}_{\mathrm{loc}}(\Omega)$ of the functional \eqref{functional} is higher differentiable, that is $u\in W^{2, p}_{\mathrm{loc}}(\Omega)$. Moreover, if $q>n$, we establish the Lipschitz continuity of the local minimizers, and for $q=n$ we prove that the gradient of $u$ is in $L^r_{\mathrm{loc}}(\Omega)$ for any $r\in(1, \infty)$. We will use the following auxiliary function that, as a function of the gradient of a local minimizer of the functional \eqref{functional}, will be the main object of our results.

\begin{equation}\label{Hdef}
H(\xi)=\left(\mu^2+\left|\xi\right|^2\right)^\frac{1}{2}, \qquad\forall\xi\in\R^{N\times n}.
\end{equation}

More precisely, our main results are the following.

\begin{thm}\label{qgrn}
	Let $u\in W^{1,p}_{\mathrm{loc}}(\Omega)$ be a local minimizer of the functional \eqref{functional}, under the assumptions \eqref{f1}--\eqref{xidependence}.\\
	If $q>n$, then $u\in W^{2,p}_{\mathrm{loc}}(\Omega)$ and $H(Du)\in L^{\infty}_{\mathrm{loc}}(\Omega)$.\\
	Moreover, there exist two constants $c_1, c_2>0$, depending on $n, N, p, q, L_1, L_2, \Arrowvert g\Arrowvert_{L^q(B_R)}$, such that the following estimates hold:
	
	\begin{equation}\label{Linftyclaim}
	\Arrowvert H\left(Du\right)\Arrowvert_{L^\infty\left(B_{\frac{R}{2}}\right)}\le c_1 \Arrowvert H\left(Du\right)\Arrowvert_{L^{p}\left(B_{R}\right)},
	\end{equation}
	
	and
	
	\begin{equation}\label{LpD2uestimateclaimqgrnthm}
	\int_{B_{\frac{R}{2}}}\left|D^2u(x)\right|^pdx\le c_2\cdot\left(\int_{B_{R}}H^p\left(Du(x)\right)dx\right),
	\end{equation}
	
	for every ball $B_R$ such that $B_R\Subset\Omega$.
\end{thm}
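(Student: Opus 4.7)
The strategy is a combination of an approximation argument with a second-order Caccioppoli estimate and a Moser-type iteration, adapted to the subquadratic regime $1<p<2$. Since the a priori regularity $u\in W^{1,p}_{\mathrm{loc}}$ does not justify second-order manipulations, I would fix a ball $B_{R}\Subset \Omega$ and regularize the integrand, for instance by mollifying $f$ in the $x$-variable and adding an $\epsilon$-perturbation of the form $\epsilon(1+|\xi|^{2})^{p/2}$, producing smooth integrands $f_\epsilon$ that still satisfy \eqref{f1}--\eqref{f4} uniformly in $\epsilon$. The minimizers $u_{\epsilon}\in u+W^{1,p}_{0}(B_{R})$ of the regularized functionals are smooth enough to justify second-order computations; it then suffices to establish \eqref{Linftyclaim}--\eqref{LpD2uestimateclaimqgrnthm} for $u_{\epsilon}$ with constants independent of $\epsilon$ and to pass to the limit via weak compactness in $W^{2,p}_{\mathrm{loc}}$, using the minimality of $u$ to identify the limit.

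For the a priori estimates on $u_{\epsilon}$, I would differentiate the Euler--Lagrange equation of $u_{\epsilon}$ with respect to $x_{s}$ and test with $\varphi=\eta^{2}D_{s}u_{\epsilon}$ for $\eta\in C_{c}^{\infty}(B_{R})$. Using \eqref{SecondDerivativeGrowth} to bound the resulting bilinear form from below, \eqref{f4} to dominate the $x$-derivative term by $g\,H(Du_{\epsilon})^{p-1}$, and Young's inequality to absorb, one obtains the basic Caccioppoli inequality
\[
\int_{B_{R}}\eta^{2}H(Du_{\epsilon})^{p-2}|D^{2}u_{\epsilon}|^{2}\,dx
\le C\int_{B_{R}}\bigl(|D\eta|^{2}+\eta^{2}g^{2}\bigr)H(Du_{\epsilon})^{p}\,dx.
\]
In view of the pointwise identity $|D(H(Du)^{p/2})|^{2}\simeq H(Du)^{p-2}|D^{2}u|^{2}$, this is effectively a Sobolev-type inequality for $H(Du_\epsilon)^{p/2}$. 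Repeating the argument with the weighted test function $\varphi=\eta^{2}D_{s}u_{\epsilon}\,H(Du_{\epsilon})^{2\gamma}$ for $\gamma\ge 0$ (suitably truncated for rigour) yields a one-parameter family of weighted Caccioppoli inequalities
\[
\int_{B_{R}}\bigl|D\bigl(\eta\, H(Du_{\epsilon})^{p/2+\gamma}\bigr)\bigr|^{2}\,dx
\le C(1+\gamma)^{2}\int_{B_{R}}\bigl(|D\eta|^{2}+\eta^{2}g^{2}\bigr)H(Du_{\epsilon})^{p+2\gamma}\,dx.
\]

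Applying the Sobolev embedding $W^{1,2}\hookrightarrow L^{2^{*}}$ on the left and Hölder's inequality with exponents $(q/2,q/(q-2))$ to the $g^{2}$-term on the right, the hypothesis $q>n$ forces $2q/(q-2)<2^{*}$, so the $g^{2}$-contribution can be controlled in $L^{2q/(q-2)}$ and reabsorbed via interpolation at the cost of a further lower-order term in $L^{p+2\gamma}$. A standard Moser iteration over a sequence of shrinking concentric balls, running $\gamma$ through a geometric sequence, then upgrades the integrability of $H(Du_{\epsilon})$ from $L^{p}$ to $L^{\infty}$ and yields \eqref{Linftyclaim} with a constant depending on $\|g\|_{L^{q}(B_{R})}$. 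Finally, inserting the $L^{\infty}$-bound into the basic Caccioppoli inequality above and using the pointwise decomposition $|D^{2}u_{\epsilon}|^{p}=\bigl(H(Du_{\epsilon})^{p-2}|D^{2}u_{\epsilon}|^{2}\bigr)^{p/2}H(Du_{\epsilon})^{p(2-p)/2}$ together with Hölder's inequality yields \eqref{LpD2uestimateclaimqgrnthm}, uniformly in $\epsilon$. The main technical difficulty is the degeneracy of the ellipticity weight $H(Du)^{p-2}$ precisely in the region where $|Du|$ is large, i.e.\ where the pointwise bound is most needed; the resolution is an asymmetric weighting of the Caccioppoli estimate, which forces Young's inequality and the choice of exponents in the iteration to be calibrated to $p$ rather than to the natural quadratic exponent, and makes the identity $|D(H(Du)^{\alpha})|^{2}\simeq H(Du)^{2\alpha-2}|D^{2}u|^{2}$ an essential book-keeping tool throughout.
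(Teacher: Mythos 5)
Your proposal follows essentially the same route as the paper: mollify in the $x$-variable to reduce to a smooth-coefficient problem where the minimizers $v_\varepsilon$ are a priori in $W^{2,2}_{\mathrm{loc}}\cap W^{1,\infty}_{\mathrm{loc}}$; prove a weighted second-order Caccioppoli inequality; identify $|D(H^{(p+\gamma)/2}(Du))|^2$ with the weighted Hessian integrand via \eqref{staralpha}; apply Sobolev, H\"older with exponents $(q/2,q/(q-2))$ on the $g^2$ term, interpolation (valid because $q>n$ gives $q/(q-2)<n/(n-2)$) and Young; run Moser's iteration; and finally use the H\"older decomposition $|D^2u|^p=(H^{p-2}|D^2u|^2)^{p/2}H^{p(2-p)/2}$ to extract the $L^p$ bound for $D^2u$. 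The passage to the limit via weak compactness and the uniqueness of minimizers with prescribed boundary data is also the paper's argument.

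However, there is a genuine gap. When you test with the weighted test function $\varphi=\eta^2 D_s u_\varepsilon\,H(Du_\varepsilon)^{2\gamma}$ (or, equivalently, with the paper's $\psi=\eta^2 H^{\gamma}(Du)\,Du$), the derivative of the weight produces a term of the type
\[
\gamma\int_{B_R}\eta^2\,H^{\gamma-2}(Du)\,\big\langle D_{\xi\xi}f(x,Du)\,D^2u,\;D\!\left(|Du|^2\right)\otimes Du\big\rangle\,dx,
\]
whose sign is \emph{not} controlled by the two-sided ellipticity bound \eqref{SecondDerivativeGrowth} alone in the vectorial case $N>1$. You assert that \eqref{SecondDerivativeGrowth} bounds "the resulting bilinear form from below," but that only handles the leading term $\int \eta^2 H^{2\gamma}\langle D_{\xi\xi}f\,D_s Du, D_s Du\rangle$; the weight-derivative term is a different bilinear expression in $D^2u$ contracted against $Du$, and in general it can have either sign. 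The paper resolves this by invoking the Uhlenbeck structure \eqref{radial}--\eqref{xidependence}: writing $f(x,\xi)=k(x,|\xi|)$, one computes $D_{\xi^\alpha_i\xi^\beta_j}f$ explicitly and the troublesome contraction collapses to $D_{tt}k\,|B|^2\ge 0$ with $B=\tfrac12 D(|Du|^2)$, so the term $V$ can simply be dropped (see \eqref{V}). Your sketch never invokes \eqref{radial}, so the iteration as written would not close for $\gamma>0$. This is not a cosmetic omission: it is precisely the structural hypothesis that makes the vectorial Moser iteration possible.

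A secondary, more minor point: the $\varepsilon$-perturbation $\varepsilon(1+|\xi|^2)^{p/2}$ you add is redundant, since it has the same growth as $f$ itself and hence does not improve ellipticity or remove degeneracy at $\mu=0$. The paper only mollifies in $x$, relying on known regularity theory for integrands with smooth $x$-dependence (and the radial structure) to get $v_\varepsilon\in W^{2,2}_{\mathrm{loc}}\cap W^{1,\infty}_{\mathrm{loc}}$; no perturbation in $\xi$ is needed, and if you did perturb in $\xi$ you would have to check that the radial structure and the bounds \eqref{SecondDerivativeGrowth} survive uniformly.
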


In the critical case $q=n$, we have the following. 

\begin{thm}\label{qeqn}
	Let $u\in W^{1,p}_{\mathrm{loc}}(\Omega)$ be a local minimizer of the functional \eqref{functional}, under the assumptions \eqref{f1}--\eqref{xidependence}.\\
	If $q=n$, then, for any $1<r<\infty$, $H(Du)\in L^{r}_{\mathrm{loc}}(\Omega)$, and there is a constant $c_1=c_1(n, N, p, r, L_1, L_2, \Arrowvert g\Arrowvert_{L^n(B_R)})\ge0$, such that, for every $R>0$ such that $B_R\Subset\Omega$, the following estimate holds
	
	\begin{equation}\label{Lrestimate-claimthm}
	\left(\int_{B_{\frac{R}{2}}}H^{r}\left(Du(x)\right)dx\right)^{\frac{1}{r}}\le c_1\cdot\left(\int_{B_{R}}H^{p}\left(Du(x)\right)dx\right)^{\frac{1}{p}}.
	\end{equation}
	
	Moreover, $u\in W^{2,p}_{\mathrm{loc}}(\Omega)$, and there exists a constant $c_2=c_2(n, N, p, L_1, L_2, \Arrowvert g\Arrowvert_{L^n(B_R)})\ge0$ such that
	
	\begin{equation}\label{LpD2uestimateclaimqeqnthm}
	\int_{B_{\frac{R}{2}}}\left|D^2u(x)\right|^pdx\le c_2\cdot\left(\int_{B_{R}}H^p\left(Du(x)\right)dx\right).
	\end{equation}
\end{thm}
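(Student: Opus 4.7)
The proof strategy is to run a Moser-type iteration on $H(Du)$ starting from a second-order Caccioppoli inequality which will also yield the $W^{2,p}$ information. First, by applying a difference-quotient argument to the Euler--Lagrange system of $\mathcal{F}$ and testing against functions of the form $\eta^{2}\tau_{h} u\,H(Du)^{\gamma}$ for a parameter $\gamma\ge 0$, I would derive the two-parameter Caccioppoli estimate
$$
\int_{B_R}\eta^2 H(Du)^{p-2+\gamma}|D^2u|^2\,dx\le C(\gamma)\!\int_{B_R}|D\eta|^2 H(Du)^{p+\gamma}\,dx + C(\gamma)\!\int_{B_R}\eta^2 g^2 H(Du)^{p+\gamma}\,dx.
$$
The left-hand side uses the ellipticity \eqref{SecondDerivativeGrowth} calibrated to the sub-quadratic regime through the auxiliary $V_p$-function; the $g^2$-term on the right arises from \eqref{f4} after Young's inequality; and the radial hypotheses \eqref{radial}--\eqref{xidependence} legitimize the manipulations.

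Setting $w:=\eta\,H(Du)^{(p+\gamma)/2}$, the chain rule combined with this Caccioppoli yields $\int|Dw|^2\,dx\le C\!\int|D\eta|^2 H^{p+\gamma}\,dx+C\!\int\eta^2 g^2 H^{p+\gamma}\,dx$. For $n\ge 3$, Sobolev's embedding gives $\|w\|_{L^{2n/(n-2)}}^2\le C\int|Dw|^2\,dx$, and H\"older's inequality against $g\in L^n$ bounds
$$
\int\eta^2 g^2 H^{p+\gamma}\,dx\le\|g\|_{L^n(B_R)}^2\,\|w\|_{L^{2n/(n-2)}}^2.
$$
Since $g\in L^n_{\mathrm{loc}}$, absolute continuity of $g^n\,dx$ lets us pick $R_0$ small enough that for $R\le R_0$ we have $C\|g\|_{L^n(B_R)}^2<\tfrac12$; reabsorbing the $g$-term then promotes $H(Du)\in L^{p+\gamma}_{\mathrm{loc}}$ to $H(Du)\in L^{(p+\gamma)n/(n-2)}_{\mathrm{loc}}$ with an explicit constant. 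Iterating with $p+\gamma_{k+1}=\tfrac{n}{n-2}(p+\gamma_k)$ and $\gamma_0=0$ exhausts every finite exponent; a covering returns the estimate to an arbitrary ball $B_R$, yielding \eqref{Lrestimate-claimthm}. When $n=2$, the embedding $W^{1,2}_0\hookrightarrow L^s$ for each $s<\infty$ plays the same role and gives an analogous (in fact stronger) conclusion.

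The bound \eqref{LpD2uestimateclaimqeqnthm} then follows by returning to the Caccioppoli with $\gamma=0$: the higher integrability just proved, via H\"older, controls $\int\eta^2 g^2 H^p\,dx\le\|g\|_{L^n(B_R)}^2\|H^p\|_{L^{n/(n-2)}(B_R)}$, while the elementary inequality $|D^2u|^p\le(H^{p-2}|D^2u|^2)^{p/2}H^{p(2-p)/2}$ combined with H\"older exponents $(2/p,2/(2-p))$ converts the weighted $L^2$ bound on $D^2u$ into the desired $L^p$ estimate. The principal obstacle, compared with the argument sketched for Theorem~\ref{qgrn}, is precisely the reabsorption in the borderline case $q=n$: there is no slack in the Sobolev exponent that would allow a small constant from Young's inequality, so one must instead trade smallness of $R$ for smallness of $\|g\|_{L^n(B_R)}$, forcing the local-to-global covering step and explaining why the constant in \eqref{Lrestimate-claimthm} unavoidably depends on $\|g\|_{L^n(B_R)}$.
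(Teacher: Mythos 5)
Your proposal captures the right analytic core for the \emph{a priori} estimate: a second-order Caccioppoli inequality tested against $\eta^2 H(Du)^\gamma\,Du$, Sobolev embedding applied to $w=\eta H(Du)^{(p+\gamma)/2}$, H\"older against $g\in L^n$ with exponents $(n/2, n/(n-2))$, and reabsorption by choosing $R$ small via absolute continuity of $g^n\,dx$ — this is precisely how the paper establishes Lemma~\ref{aprioriqeqn}. The observation that, unlike the case $q>n$, here one can no longer pass to the $L^\infty$ limit because the admissible radius shrinks as $\gamma$ grows, is exactly the point the paper emphasizes.

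However, there is a genuine gap: you only have $u\in W^{1,p}_{\mathrm{loc}}(\Omega)$ at the outset, and the test function $\eta^2\tau_h u\,H(Du)^\gamma$ (or its undifferenced counterpart) is not admissible for this $u$ when $\gamma>0$ — one needs $Du\in L^\infty_{\mathrm{loc}}$ for $H(Du)^\gamma$ to be integrable against $D^2u$, and $u\in W^{2,2}_{\mathrm{loc}}$ to make sense of the second-variation manipulations and of the chain rule for $D(H^{(p+\gamma)/2}(Du))$. The paper resolves this with a separate approximation step: it first proves Lemma~\ref{aprioriqeqn} as a purely \emph{a priori} bound under the additional hypothesis $u\in W^{2,2}_{\mathrm{loc}}\cap W^{1,\infty}_{\mathrm{loc}}$, then mollifies the integrand to obtain $f_\varepsilon$, considers the minimizers $v_\varepsilon$ of the regularized functional $\mathcal{F}_\varepsilon$ with boundary datum $u$ on $B_R$ (which do enjoy $W^{2,2}\cap W^{1,\infty}$ regularity), applies the a priori estimates to $v_\varepsilon$ with constants uniform in $\varepsilon$, extracts weak limits, and finally identifies the limit with $u$ using strict convexity. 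Your proposal never performs this step, so the Moser iteration is being run on a function that you have not justified is smooth enough for the computation. A bare difference-quotient argument does not repair this for $\gamma>0$, because you would still need to bootstrap boundedness of $Du$ before the weighted test functions become admissible, and in the sub-quadratic regime that bootstrap is exactly what is at stake. You should either insert the mollification/approximation scheme of the paper, or truncate the weight $H(Du)^\gamma$ at level $M$ and pass $M\to\infty$ — but as written the argument is incomplete.
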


It is worth mentioning that, in this case, the partial map $x\mapsto D_{\xi}f(x, \xi)$ needs not to be continuous. Actually, by the Sobolev embedding theorem we have that it belongs to the space $VMO$ of function with vanishing mean oscillation (see \cite{23} for the precise definition). The regularity of solutions to PDEs with $VMO$ coefficients goes back to \cite{Iwaniec-Sbordone} and \cite{Kinnunen-Zhou}.\\
Estimate \eqref{Lrestimate-claimthm} can be interpreted as an extension of the result in \cite{Kinnunen-Zhou} that concerns the $p-$Laplace operator to more general operator with sub-quadratic growth.\\
The proofs of our results are achieved combining suitable a priori estimates with an approximation argument. First of all, making the a priori assumption that $u\in W^{2,2}_{\mathrm{loc}}\left(\Omega\right)\cap W^{1,\infty}_{\mathrm{loc}}\left(\Omega\right)$, we will use Moser's iterative technique (see \cite{CGGP2}) to find an a priori estimates for the $L^\infty-$norm of $H\left(Du\right)$ in case $q>n$, and an a priori estimate for the $L^r-$norm of $H(Du)$ for any $1<r<\infty$ if $q=n$. We will also find a new a priori estimate for the $L^p-$norm of the second derivatives of $u$ that impreves that established in \cite{Gentile}.\\
After that, by approximation, we will use these a priori estimates to prove that a minimizer $u\in W^{1,p}_{\mathrm{loc}}(\Omega)$ is actually in $W^{2,p}_{\mathrm{loc}}(\Omega)$ and, if $q>n$, then $H(Du)\in L^{\infty}_{\mathrm{loc}}(\Omega)$, while, if $q=n$,
$H(Du)\in L^r_{\mathrm{loc}}(\Omega)$ for all $1<r<\infty.$

In \cite{Gentile}, making some weaker assumptions about the dependence of $f$ on the $\xi-$variable, more precisely, $\xi\mapsto f(x, \xi)$ is of class $C^1(\R^{N\times n})$, and instead of \eqref{SecondDerivativeGrowth}, for some $\alpha>0,$

\begin{equation}\label{f2}
\left<D_\xi f(x, \xi)-D_\xi f(x, \eta), \xi-\eta\right>\ge\alpha\left(\mu^2+|\xi|^2+|\eta|^2\right)^\frac{p-2}{2}|\xi-\eta|^2,
\end{equation}

for every $\xi, \eta\in \R^{N\times n}$ and for almost every $x\in\Omega$, and assuming that, instead of \eqref{f4}, the following condition

\begin{equation}\label{f4'}
|D_\xi f(x, \xi)-D_\xi f(y, \xi) |\le \left(g(x)+g(y)\right)\left|x-y\right|\left(\mu^2+|\xi|^2\right)^\frac{p-1}{2},
\end{equation} 

holds for a function $g\in L^q_{\mathrm{loc}}(\Omega)$ with $q\ge\frac{2n}{p}$, an a priori estimate for the $W^{2, p}-$norm of the local minimizers of the functional \eqref{functional} has been proved.

\section{Notations and preliminaries}

In this section we list the notations that we use in this paper and recall some tools that will be useful to prove our results.\\
We shall follow the usual convention and denote by $C$ or $c$ a general constant that may vary on different occasions, even within the same
line of estimates. Relevant dependencies on parameters and special
constants will be suitably emphasized using parentheses or
subscripts. All the norms we use on $\R^n$, $\R^N$ and $\R^{N\times n}$ will be
the standard Euclidean ones and denoted by $| \cdot |$ in all cases.
In particular, for matrices $\xi$, $\eta \in \R^{N\times n}$ we write $\langle
\xi, \eta \rangle : = \text{trace} (\xi^T \eta)$ for the usual inner
product of $\xi$ and $\eta$, and $| \xi | : = \langle \xi, \xi
\rangle^{\frac{1}{2}}$ for the corresponding Euclidean norm. When $a
\in \R^N$ and $b \in \R^n$ we write $a \otimes b \in \R^{N\times n}$ for the
tensor product defined
as the matrix that has the element $a_{r}b_{s}$ in its $r$-th row and $s$-th column.\\
For  a  $C^2$ function $f \colon \Omega\times\R^{N\times n} \to \R$, we write
$$
D_\xi f(x,\xi )[\eta ] := \frac{\rm d}{{\rm d}t}\Big|_{t=0} f(x,\xi
+t\eta )\quad \mbox{ and } \quad D_{\xi\xi}f(x,\xi )[\eta ,\eta ] :=
\frac{\rm d^2}{{\rm d}t^{2}}\Big|_{t=0} f(x,\xi +t\eta )
$$
for $\xi$, $\eta \in \R^{N\times n}$ and for almost every $x\in \Omega$.\\
With the symbol $B(x,r)=B_r(x)=\{y\in
\R^n:\,\, |y-x|<r\}$, we will denote the ball centered at $x$ of
radius $r$ and
$$(u)_{x_0,r}= \Mint_{B_r(x_0)}u(x)\,dx,$$
stands for the integral mean of $u$ over the ball $B_r(x_0)$. We
shall omit the dependence on the center  when it is clear from the context.

\section{A priori estimates}\label{A priori section}

Our first step is to prove some a priori estimates. More precisely, making a distinction between the cases $q>n$ and $q=n$ in the assumption \eqref{f4}, and being $H$ the function defined by \eqref{Hdef}, we want to prove the following claims.

\begin{lemma}\label{aprioriqgrn}
	Let $u\in W^{2,2}_{\mathrm{loc}}(\Omega)\cap W^{1,\infty}_{\mathrm{loc}}(\Omega)$ be a local minimizer of the functional \eqref{functional}, under the assumptions \eqref{f1}--\eqref{xidependence}. If $q>n$, then there exist two constants $c_1, c_2\ge0$, depending on $n, N, p, q, L_1, L_2, \Arrowvert g\Arrowvert_{L^q(B_R)}$, such that the following estimates hold:

\begin{equation}\label{aprioriLinftyclaim}
\Arrowvert H\left(Du\right)\Arrowvert_{L^\infty\left(B_{\frac{R}{2}}\right)}\le c_1 \Arrowvert H\left(Du\right)\Arrowvert_{L^{p}\left(B_{R}\right)},
\end{equation}

and

\begin{equation}\label{LpD2uestimateclaimqgrn}
\int_{B_{\frac{R}{2}}}\left|D^2u(x)\right|^pdx\le c_2\int_{B_{R}}H^p\left(Du(x)\right)dx,
\end{equation}

for every ball $B_R$ such that $B_R\Subset\Omega$.
\end{lemma}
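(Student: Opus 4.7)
The plan is to derive a weighted Caccioppoli-type inequality, upgrade it via Sobolev embedding together with the assumption $q>n$, and then run a Moser iteration on $H(Du)$.

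Exploiting the a priori regularity $u\in W^{2,2}_{\mathrm{loc}}\cap W^{1,\infty}_{\mathrm{loc}}$, I would differentiate the Euler--Lagrange equation of $\mathcal{F}$ in the direction $x_s$ and test with $\varphi=\eta^{2}H(Du)^{2\gamma}\partial_s u$, where $\eta\in C^{1}_c(B_R)$ is a cutoff equal to $1$ on $B_r$ and $\gamma\ge 0$ is a free parameter. Summing over $s$, the ellipticity bound \eqref{SecondDerivativeGrowth} produces on the left a coercive contribution bounded below by $L_1\int\eta^{2}H(Du)^{p-2+2\gamma}|D^2u|^2\,dx$; the terms involving $D\eta$ and $D(H^{2\gamma})$ are handled through the upper bound in \eqref{SecondDerivativeGrowth} and Young's inequality (the radial structure \eqref{radial}--\eqref{xidependence} is what keeps these mixed contributions diagonal and manageable). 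On the right, \eqref{f4} introduces a factor $g(x)H(Du)^{p-1}$ which, after another Young's step, yields the weighted Caccioppoli
$$\int_\Omega\eta^{2}H(Du)^{p-2+2\gamma}|D^2u|^2\,dx\le C(\gamma)\int_\Omega\bigl(|D\eta|^2+\eta^{2}g(x)^2\bigr)H(Du)^{p+2\gamma}\,dx,$$
with $C(\gamma)$ polynomial in $\gamma$.

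Setting $\beta=(p+2\gamma)/2$ and $v=H(Du)^{\beta}$, the pointwise bound $|D(\eta v)|^2\lesssim|D\eta|^2 v^2+\beta^2\eta^{2}H(Du)^{p-2+2\gamma}|D^2u|^2$ and the Sobolev embedding $W^{1,2}_0\hookrightarrow L^{2^{*}}$ convert the Caccioppoli into
$$\Bigl(\int(\eta v)^{2^{*}}\,dx\Bigr)^{2/2^{*}}\le C(\beta)\int\bigl(|D\eta|^2+\eta^{2}g^2\bigr)v^2\,dx.$$
The crux is the treatment of $\int\eta^{2}g^2 v^2$: since $q>n$, Hölder's inequality with exponents $(q/2,q/(q-2))$ followed by the interpolation
$$\|\eta v\|_{L^{2q/(q-2)}}\le\|\eta v\|_{L^2}^{\theta}\|\eta v\|_{L^{2^{*}}}^{1-\theta},\qquad \theta=1-\tfrac{n}{q}>0,$$
and a further Young's inequality allows me to absorb the $L^{2^{*}}$ factor into the left-hand side. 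The result is a reverse-Hölder inequality of the shape
$$\|H(Du)\|_{L^{2\chi\beta}(B_r)}\le\left(\frac{C(\beta,\|g\|_{L^q})}{(R-r)^{2}}\right)^{1/(2\beta)}\|H(Du)\|_{L^{2\beta}(B_R)},\qquad \chi=\tfrac{n}{n-2},$$
valid for every $\gamma\ge 0$ and every pair of concentric balls $B_r\subset B_R\Subset\Omega$. A classical Moser iteration along the sequences $\beta_k=(p/2)\chi^{k}$ and $R_k=R/2+R/2^{k+1}$, starting from $\beta_0=p/2$, then delivers \eqref{aprioriLinftyclaim}, provided the constants $C(\beta_k)$ grow polynomially so that the infinite product converges.

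For the second estimate \eqref{LpD2uestimateclaimqgrn}, I would return to the Caccioppoli inequality with $\gamma=0$ and use the identity
$$|D^2u|^{p}=\bigl(H(Du)^{p-2}|D^2u|^{2}\bigr)^{p/2}\cdot H(Du)^{p(2-p)/2},$$
followed by Hölder with exponents $(2/p,2/(2-p))$; inserting the Caccioppoli bound, the remaining $g^2 H(Du)^{p}$ integral is dispatched through the $L^{\infty}$ bound on $H(Du)$ just obtained together with $L^q\hookrightarrow L^2$ on bounded sets (as $q>n\ge 2$). The main technical obstacle is bookkeeping: one has to track the polynomial dependence of $C(\beta)$ through the Young/Hölder steps so that the Moser product converges, and the sub-quadratic regime $p<2$ makes the weight $H(Du)^{p-2}$ potentially degenerate when $\mu=0$. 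This is precisely what the a priori assumption $u\in W^{2,2}_{\mathrm{loc}}\cap W^{1,\infty}_{\mathrm{loc}}$ is tailored to overcome, and it will be removed later by the approximation argument outlined in the introduction.
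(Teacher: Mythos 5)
Your argument reproduces the paper's proof essentially step by step: the same differentiated test function ($\psi=\eta^{2}H^{\gamma}Du$ in the paper's notation, a reparametrization of your $\eta^{2}H^{2\gamma}\partial_{s}u$), the same weighted Caccioppoli inequality after exploiting the radial structure to drop the nonnegative term $V$, the same Sobolev--H\"older--interpolation--Young chain with $\theta=1-n/q$, and the same Moser iteration along $\beta_{k}=(p/2)(n/(n-2))^{k}$. The only minor variation is in the proof of \eqref{LpD2uestimateclaimqgrn}, where you dispatch the $g^{2}H^{p}$ integral via the $L^{\infty}$ bound just obtained, whereas the paper uses the first iteration step \eqref{Iteration0}; both routes are valid and give the stated dependence of constants.
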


\begin{lemma}\label{aprioriqeqn}
	Let $u\in W^{2,2}_{\mathrm{loc}}(\Omega)\cap W^{1,\infty}_{\mathrm{loc}}(\Omega)$ be a local minimizer of the functional \eqref{functional}, under the assumptions \eqref{f1}--\eqref{xidependence}. If $q=n$, then, for any $1<r<\infty$ there is a constant $c_1\ge0$, depending on $n, N, p, r, L_1, L_2, \Arrowvert g\Arrowvert_{L^n(B_R)}$, such that, for every $R>0$ such that $B_R\Subset\Omega$, the following estimate holds
	
\begin{equation}\label{Lrestimate-claim}
\left(\int_{B_{\frac{R}{2}}}H^{r}\left(Du(x)\right)dx\right)^{\frac{1}{r}}\le c_1\left(\int_{B_{R}}H^{p}\left(Du(x)\right)dx\right)^{\frac{1}{p}}.
\end{equation}

Moreover, there exists a constant $c_2=c_2(n, N, p, L_1, L_2, \Arrowvert g\Arrowvert_{L^n(B_R)})\ge0$ such that
	
\begin{equation}\label{LpD2uestimateclaimqeqn}
	\int_{B_{\frac{R}{2}}}\left|D^2u(x)\right|^pdx\le c_2\int_{B_{R}}H^p\left(Du(x)\right)dx.
\end{equation}
\end{lemma}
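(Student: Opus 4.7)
The approach parallels Lemma~\ref{aprioriqgrn}: a Caccioppoli-type inequality for powers of $H(Du)$, a Moser iteration that climbs the integrability ladder, and a final step converting the weighted $L^2$ estimate for $D^2u$ into an unweighted $L^p$ one. Since $u\in W^{2,2}_{\mathrm{loc}}\cap W^{1,\infty}_{\mathrm{loc}}$, the Euler--Lagrange equation can be differentiated in each coordinate direction. Testing the differentiated equation with $\eta^2 H(Du)^{2\gamma}D_s u$ and summing over $s$, then invoking the ellipticity \eqref{SecondDerivativeGrowth} and the bound \eqref{f4} via Young's inequality to absorb the $|D^2u|$ terms, one expects, for every $\gamma\ge 0$ and every cut-off $\eta$ between concentric balls $B_\rho\Subset B_R$, an inequality of the form
\begin{equation*}
\int_{B_R}\eta^2\bigl|D(H(Du)^{(p+2\gamma)/2})\bigr|^2\,dx
\le C(1+\gamma)^2\left[\int_{B_R}\eta^2 g(x)^2 H(Du)^{p+2\gamma}\,dx+\frac{1}{(R-\rho)^2}\int_{B_R}H(Du)^{p+2\gamma}\,dx\right].
\end{equation*}

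Setting $w:=\eta\,H(Du)^{(p+2\gamma)/2}$, Sobolev's embedding gives $\Arrowvert w\Arrowvert_{L^{2^*}}^2\le C\Arrowvert Dw\Arrowvert_{L^2}^2$, and H\"older's inequality yields $\int g^2w^2\le\Arrowvert g\Arrowvert_{L^n(B_R)}^2\,\Arrowvert w\Arrowvert_{L^{2^*}}^2$. The same $L^{2^*}$-norm of $w$ now appears on both sides; in the super-critical regime $q>n$ of Lemma~\ref{aprioriqgrn}, the analogous Sobolev--H\"older step produces an $L^q$-H\"older factor whose exponent is strictly better and can be buried by Young's inequality, whereas for $q=n$ the exponents match exactly. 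The remedy is equi-integrability of $|g|^n$: given $\gamma$ and $\Arrowvert g\Arrowvert_{L^n(B_R)}$ one chooses a threshold $r_0$ so that $C(1+\gamma)^2\Arrowvert g\Arrowvert_{L^n(B_r(x))}^2\le 1/2$ on every ball $B_r(x)\subset B_R$ with $r\le r_0$, and the offending term is absorbed. What remains is a higher-integrability estimate raising the exponent from $p+2\gamma$ to $(p+2\gamma)n/(n-2)$ on such small balls; finitely many Moser-style iterations reach any target $r\in(1,\infty)$, and a finite cover of $B_{R/2}$ by balls of radius $\le r_0$ yields \eqref{Lrestimate-claim}.

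To prove \eqref{LpD2uestimateclaimqeqn}, I would apply the Caccioppoli inequality with $\gamma=0$, bounding $\int g^2 H(Du)^p$ by H\"older together with the improved integrability \eqref{Lrestimate-claim} for $H(Du)$. This controls $\int H(Du)^{p-2}|D^2u|^2$, which is then converted into the $L^p$-estimate for $D^2u$ by writing $|D^2u|^p=\bigl(H(Du)^{(p-2)/2}|D^2u|\bigr)^p\cdot H(Du)^{p(2-p)/2}$ and applying H\"older with exponents $2/p$ and $2/(2-p)$, permitted by $1<p<2$. The principal obstacle is precisely the borderline nature of $q=n$: the Sobolev--H\"older step reproduces the left-hand side, so absorption is only possible on balls whose radius depends on both the iteration index $\gamma$ and the $L^n$-modulus of $g$. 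Propagating this through a finite number of iterations while tracking the dependence of the constants, and passing from small-ball estimates to an estimate on all of $B_{R/2}$ via a covering, is the main technical hurdle.
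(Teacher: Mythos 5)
Your proposal is correct and follows essentially the same route as the paper: the Caccioppoli inequality for powers of $H(Du)$ from the second variation, H\"older with exponents $\bigl(\tfrac n2,\tfrac n{n-2}\bigr)$ which makes the $L^{2^*}$-norm of $\eta H(Du)^{(p+\gamma)/2}$ reappear on the right, absorption via absolute continuity of $\int g^n$ on small balls (the paper introduces $R_\gamma$ with $R_\gamma\to0$ as $\gamma\to\infty$, exactly the phenomenon you identify), finitely many Moser steps to reach any fixed $r<\infty$, and the H\"older trick $|D^2u|^p=(H^{(p-2)/2}|D^2u|)^pH^{p(2-p)/2}$ to pass from the weighted $L^2$ bound to the $L^p$ bound on $D^2u$. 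Two minor points of divergence worth recording. First, for \eqref{LpD2uestimateclaimqeqn} the paper does not invoke the already-established higher integrability \eqref{Lrestimate-claim}; it simply repeats the $\gamma=0$ Caccioppoli estimate and absorbs the Sobolev term directly via \eqref{abscont}, which keeps $c_2$ manifestly independent of any auxiliary integrability exponent (your route gives the same conclusion after fixing, say, $r=\tfrac{pn}{n-2}$, but the paper's version is a touch cleaner). Second, you are more explicit than the paper about the fact that absorption forces the working radius below a threshold depending on $\gamma$ and on the $L^n$-modulus of $g$, so that a finite covering of $B_{R/2}$ is needed to recover the stated estimate for an arbitrary $R$ with $B_R\Subset\Omega$; the paper leaves this patching implicit and states the constant as depending only on $\|g\|_{L^n(B_R)}$, which, strictly speaking, undersells the dependence on the equi-integrability of $g^n$. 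Your care on this point is a genuine, if small, improvement in precision.
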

\subsection{The case $q>n$: proof of Lemma \ref{aprioriqgrn}}

\begin{proof}[Proof of Lemma \ref{aprioriqgrn}]
Our starting point is the the Second Variation of the functional $\mathcal{F}$.
Let us consider a test function $\varphi=D\psi$, with $\psi\in C^\infty_0(\Omega)$, and put $\varphi$ in the Euler-Lagrange equation of $\mathcal{F}$, so we have

\begin{equation}
\int_{\Omega}\left<D_\xi f(x, Du(x)), D^2\psi(x)\right>dx=0,
\end{equation}

and an integration by parts yields

\begin{equation}\label{SecondVariation1}
\int_{\Omega}\left<D_x\left(D_\xi f(x, Du(x))\right), D\psi(x)\right>=0,
\end{equation}

that is

\begin{equation}\label{SecondVariation2}
\int_{\Omega}\left<D_{x\xi}f(x, Du(x))+D_{\xi\xi}f(x, Du(x))D^2u(x), D\psi(x)\right>=0.
\end{equation}

Now, for a point $x_0\in\Omega$, we set $B_R=B_R(x_0)$, where $0<\rho<R<d(\partial\Omega, x_0)$, and we choose a cut-off function $\eta\in C^{\infty}_0(B_R)$ such as $0\le\eta\le1$, $\eta\equiv1$ on $B_\rho$, and $\left|D\eta\right|\le\frac{c}{R-\rho}$ for a constant $c>0$. The a priori assumption $u\in W^{1, \infty}_{\mathrm{loc}}(\Omega)\cap W^{2,2}_{\mathrm{loc}}(\Omega)$ allows as to consider, for $\gamma\ge0$, the test function $\psi=\eta^2\left(\mu^2+\left|Du\right|^2\right)^\frac{\gamma}{2} Du$ in the equation \eqref{SecondVariation2}. Computing the derivatives of $\psi$, we get

\begin{align*}
D\psi&=2\eta\left(\mu^2+\left|Du\right|^2\right)^\frac{\gamma}{2}D\eta\otimes Du+\frac{\gamma}{2}\eta^2\left(\mu^2+\left|Du\right|^2\right)^\frac{\gamma-2}{2}D\left(\left|Du\right|^2\right)\otimes Du \\&+\eta^2\left(\mu^2+\left|Du\right|^2\right)^\frac{\gamma}{2}D^2u,
\end{align*}

and the equation \eqref{SecondVariation2} becomes

\begin{align}\label{SecondVariation3}
0&=2\int_{B_R}\left<D_{x\xi} f\left(x, Du(x)\right), \eta(x)\left(\mu^2+\left|Du(x)\right|^2\right)^\frac{\gamma}{2} D\eta(x)\otimes Du(x)\right>dx\notag\\
&+\frac{\gamma}{2}\int_{B_R}\left<D_{x\xi}f\left(x, Du(x)\right), \eta^2(x)\left(\mu^2+\left|Du(x)\right|^2\right)^\frac{\gamma-2}{2}D\left(\left|Du(x)\right|^2\right)\otimes Du(x)\right>dx\notag\\
&+\int_{B_R}\left<D_{x\xi}f\left(x, Du(x)\right), \eta^2(x) \left(\mu^2+\left|Du(x)\right|^2\right)^\frac{\gamma}{2} D^2u(x)\right>dx\notag\\
&+2\int_{B_R}\left<D_{\xi\xi}f\left(x, Du(x)\right)D^2u(x), \eta(x)\left(\mu^2+\left|Du(x)\right|^2\right)^\frac{\gamma}{2}D\eta(x)\otimes Du(x)\right>dx\notag\\
&+\frac{\gamma}{2}\int_{B_R}\left<D_{\xi\xi}f\left(x, Du(x)\right)D^2u(x), \eta^2(x)\left(\mu^2+\left|Du(x)\right|^2\right)^\frac{\gamma-2}{2}D\left(\left|Du(x)\right|^2\right)\otimes Du(x)\right>dx\notag\\
&+\int_{B_R}\left<D_{\xi\xi}f\left(x, Du(x)\right)D^2u(x),\eta^2(x)\left(\mu^2+\left|Du(x)\right|^2\right)^\frac{\gamma}{2}D^2u(x)\right>dx\notag\\
&=I+II+III+IV+V+I_0.
\end{align}

The integral $V$ is non-negative by the assumption $f(x, \xi)=k(x, |\xi|)$. Actually, it suffices to calculate 

\begin{equation*}
D_{{\xi^\alpha_i}{\xi^\beta_j}}f(x, \xi)=D_{tt}k(x, \left|\xi\right|)\frac{\xi^\alpha_i\xi^\beta_j}{|\xi|^2}+D_tk(x, |\xi|)\left(\frac{\delta^{\alpha\beta}\delta_{ij}}{\left|\xi\right|}-\frac{\xi^\alpha_i\xi^\beta_j}{\left|\xi\right|^3}\right)
\end{equation*}

and use the definition of the scalar product to deduce that

\begin{equation}\label{V}
V\ge\frac{\gamma}{2}\int_{B_{R}}\left(\mu^2+\left|Du(x)\right|^2\right)^\frac{p-2+\gamma}{2}D\left(\left|Du(x)\right|^2\right)dx \ge 0.
\end{equation}

So, from \eqref{SecondVariation3}, we get

\begin{equation}\label{generalestimate}
I_0\le I_0+V\le\left|I\right|+\left|II\right|+\left|III\right|+\left|IV\right|.
\end{equation}

In the following, we will often use the trivial inequality

\begin{equation}\label{Dualpha}
\left|\xi\right|\le\left(\mu^2+\left|\xi\right|^2\right)^\frac{1}{2}, \qquad \forall \xi\in\R^{N\times n}.
\end{equation}

By the left inequality in the hypothesis \eqref{SecondDerivativeGrowth}, we get 

\begin{align}\label{I_0}
\left|I_0\right|&\ge c\int_{B_R}\eta^2(x)\left(\mu^2+\left|Du(x)\right|^2\right)^\frac{p-2}{2}\left|D^2u(x)\right|^2\left(\mu^2+\left|Du(x)\right|^2\right)^\frac{\gamma}{2}dx\notag\\
&=c\int_{B_R}\eta^2(x)\left|D^2u(x)\right|^2\left(\mu^2+\left|Du(x)\right|^2\right)^\frac{p+\gamma-2}{2}dx.
\end{align}

To estimate the term $I$, we use \eqref{f4} and \eqref{Dualpha}, thus getting

\begin{align}\label{I}
\left|I\right|&\le 2\int_{B_R}\eta(x)\left|D\eta(x)\right|g(x)\left(\mu^2+\left|Du(x)\right|^2\right)^\frac{p-1}{2}\left|Du(x)\right|\left(\mu^2+\left|Du(x)\right|^2\right)^\frac{\gamma}{2} dx\notag\\
&\le 2\int_{B_R}\eta(x)\left|D\eta(x)\right|g(x)\left(\mu^2+\left|Du(x)\right|^2\right)^\frac{p+\gamma}{2}dx.
\end{align}

By Young's Inequality, we have

\begin{align}\label{IY}
&|I|\le2\int_{B_R}\eta(x)\left|D\eta(x)\right|g(x)\left(\mu^2+\left|Du(x)\right|^2\right)^\frac{p+\gamma}{2}\notag\\&\le c\int_{B_R}\eta^2(x)g^2(x)\left(\mu^2+\left|Du(x)\right|^2\right)^\frac{p+\gamma}{2}dx\notag\\
&+c\int_{B_R}\left|D\eta(x)\right|^2\left(\mu^2+\left|Du(x)\right|^2\right)^\frac{p+\gamma}{2}dx.
\end{align}

We use again \eqref{f4} and \eqref{Dualpha} to estimate the term $\left|II\right|$ as follows

\begin{equation}\label{II'}
\left|II\right|\le \gamma\int_{B_R}\eta^2(x)g(x)\left(\mu^2+\left|Du(x)\right|^2\right)^\frac{p-1+\gamma}{2}\left|D^2u(x)\right|dx.
\end{equation}

Writing $\frac{p-1}{2}=\frac{p-2}{4}+\frac{p}{4}$, and using Young's Inequality with exponents $\left(2, 2\right)$, we get

\begin{align}\label{II}
\left|II\right|&\le\varepsilon\int_{B_R}\eta^2(x)\left|D^2u(x)\right|^2\left(\mu^2+\left|Du(x)\right|^2\right)^\frac{p+\gamma-2}{2}dx\notag\\
&+c(\varepsilon)\cdot\gamma^2\int_{B_R}\eta^2(x)g^2(x)\left(\mu^2+\left|Du(x)\right|^2\right)^\frac{p+\gamma}{2}dx.
\end{align}

In order to estimate $\left|III\right|$, we use \eqref{f4} and Young's Inequality as before:

\begin{align}\label{III}
\left|III\right|&\le\int_{B_R}\eta^2(x)g(x)\left|D^2u(x)\right|\left(\mu^2+\left|Du(x)\right|^2\right)^\frac{p+\gamma-1}{2}dx\notag\\
&\le\varepsilon\int_{B_R}\eta^2(x)\left|D^2u(x)\right|^2\left(\mu^2+\left|Du(x)\right|^2\right)^\frac{p+\gamma-2}{2}\notag\\
&+c(\varepsilon)\int_{B_R}\eta^2(x)g^2(x)\left(\mu^2+\left|Du(x)\right|^2\right)^\frac{p+\gamma}{2}.
\end{align}

We can estimate $IV$ using \eqref{SecondDerivativeGrowth} and \eqref{Dualpha} thus getting

\begin{equation}\label{IV'}
\left|IV\right|\le 2\int_{B_R}\eta(x)\left|D\eta(x)\right|\left|D^2u(x)\right|\left(\mu^2+\left|Du(x)\right|^2\right)^\frac{p+\gamma-1}{2}dx.
\end{equation}

Since $\frac{\gamma+1}{2}=\frac{\gamma}{4}+\frac{\gamma+2}{4}$, using Young's Inequality, we have

\begin{align}\label{IV}
\left|IV\right|&\le\varepsilon\int_{B_R}\eta^2(x)\left|D^2u(x)\right|^2\left(\mu^2+\left|Du(x)\right|^2\right)^\frac{p+\gamma-2}{2}dx\notag\\
&+c(\varepsilon)\int_{B_R}\left|D\eta(x)\right|^2\left(\mu^2+\left|Du(x)\right|^2\right)^\frac{p+\gamma}{2}dx.
\end{align}

Now, inserting \eqref{I_0}, \eqref{IY}, \eqref{II}, \eqref{III} and \eqref{IV} in \eqref{generalestimate}, and choosing $\varepsilon$ such that we can reabsorb the first terms on the right-hand sides of \eqref{II} and \eqref{III}, we get

\begin{align}\label{-}
&\int_{B_R}\eta^2(x)\left(\mu^2+\left|Du(x)\right|^2\right)^\frac{p+\gamma-2}{2}\left|D^2u(x)\right|^2dx\notag\\
&\le c(1+\gamma^2)\int_{B_R}\eta^2(x)g^2(x)\left(\mu^2+\left|Du(x)\right|^2\right)^\frac{p+\gamma}{2}dx\notag\\
&+c\int_{B_R}\left|D\eta(x)\right|^2\left(\mu^2+\left|Du(x)\right|^2\right)^\frac{p+\gamma}{2}dx.
\end{align} 

We want to control the first integral in the right-hand side of \eqref{-} with some terms like the others of the same inequality.\\

Recalling the definition of the auxiliary function $H$, in  \eqref{Hdef}, \eqref{-} becomes

\begin{align}\label{2.17bis}
&\int_{B_R}\eta^2(x)H^{p+\gamma-2}\left(Du(x)\right)\left|D^2u(x)\right|^2dx\notag\\
&\le c(1+\gamma^2)\int_{B_R}\eta^2(x)g^2(x)H^{p+\gamma}\left(Du(x)\right)dx\notag\\
&+c\int_{B_R}\left|D\eta(x)\right|^2H^{p+\gamma}\left(Du(x)\right)dx.
\end{align} 

Now, we observe that

\begin{equation}\label{**'}
H^{p+\gamma-4}\left(Du\right)\cdot\left|D\left(\left|Du\right|^2\right)\right|^2=4H^{p+\gamma-4}\left(Du\right)\left|Du\right|^2\left|D^2u\right|^2\le 4 H^{p+\gamma-2}\left|D^2u\right|^2,
\end{equation}

where we also used \eqref{Dualpha}. So, using \eqref{**'} in the left-hand side of \eqref{2.17bis}, we get

\begin{align}\label{***''}
&\int_{B_R}\eta^2(x)H^{p+\gamma-4}\left(Du(x)\right)\left|D\left(\left|Du(x)\right|^2\right)\right|^2dx\notag\\
&\le c(1+\gamma^2)\int_{B_R}\eta^2(x)g^2(x)H^{p+\gamma}\left(Du(x)\right)dx\notag\\
&+c\int_{B_R}\left|D\eta(x)\right|^2H^{p+\gamma}\left(Du(x)\right)dx.
\end{align}

One can easily check that, for any $\alpha\in\R$, 

\begin{equation}\label{staralpha}
D\left(H^\alpha\left(Du\right)\right)=\frac{\alpha}{2}\cdot H^{\alpha-2}\left(Du\right)\cdot D\left(\left|Du\right|^2\right),
\end{equation}

So, using \eqref{staralpha} with $\alpha=\frac{p+\gamma}{2}$, we have

\begin{align}\label{starp}
&\left|H^{p+\gamma-4}\left(Du\right)\cdot\left|D\left(\left|Du\right|^2\right)\right|^2\right|=\left|H^{\frac{p+\gamma}{2}-2}\left(Du\right)\cdot\left|D\left(\left|Du\right|^2\right)\right|\right|^2\notag\\
&=\left|\frac{4}{p+\gamma}\cdot D\left(H^{\frac{p+\gamma}{2}}\left(Du\right)\right)\right|^2.
\end{align}

Combining \eqref{starp} with \eqref{***''}, we get

\begin{align}\label{***''+starp}
&\frac{4}{p+\gamma}\int_{B_{R}}\eta^2(x)\left|\cdot D\left(H^{\frac{p+\gamma}{2}}\left(Du(x)\right)\right)\right|^2dx\notag\\&\le\int_{B_R}\eta^2(x)H^{p+\gamma-4}\left(Du(x)\right)\left|D\left(\left|Du(x)\right|^2\right)\right|^2dx\notag\\
&\le c(1+\gamma^2)\int_{B_R}\eta^2(x)g^2(x)H^{p+\gamma}\left(Du(x)\right)dx\notag\\
&+c\int_{B_R}\left|D\eta(x)\right|^2H^{p+\gamma}\left(Du(x)\right)dx.
\end{align}

Before going further, since we want to apply Moser's iteration technique, starting from $\gamma=0$, let's observe that, if $u\in W^{2, 2}_{\mathrm{loc}}(\Omega)\cap W^{1, \infty}_{\mathrm{loc}}(\Omega)$, then $H^{\frac{p+\gamma}{2}}\left(Du\right)\in W^{1,2}_{\mathrm{loc}}(\Omega)$.

Now we define the function

\begin{equation}\label{G-definition}
G=\eta\cdot H^{\frac{p+\gamma}{2}}\left(Du\right),
\end{equation}

so that, for $\gamma=0$, $G\in W^{1,2}_{0}(B_R)$, and denoting $2^*=\frac{2n}{n-2}$, by Sobolev's Inequality we have

\begin{equation}\label{Sobolev-G}
\left(\int_{B_R}\left|G(x)\right|^{2^*}dx\right)^\frac{2}{2^*}\le c\int_{B_{R}}\left|DG(x)\right|^2dx,
\end{equation}

so

\begin{align}\label{****'}
&\left(\int_{B_{R}}\eta^{2^*}(x)\cdot H^{\frac{2^*}{2}\cdot(p+\gamma)}\left(Du(x)\right)dx\right)^{\frac{2}{2^*}}\notag\\
&\le c\left(\int_{B_{R}}\Bigg|\eta(x)\cdot \left|D\left(H^{\frac{p+\gamma}{2}}\left(Du(x)\right)\right)\right|+\left|D\eta(x)\right|\cdot H^{\frac{p+\gamma}{2}}\left(Du(x)\right)\Bigg|^2dx\right)\notag\\
&\le c\int_{B_{R}}\eta^2(x)\cdot \left|D\left(H^{\frac{p+\gamma}{2}}\left(Du(x)\right)\right)\right|^2dx+c\int_{B_{R}}\left|D\eta(x)\right|^2\cdot H^{p+\gamma}\left(Du(x)\right)dx.
\end{align}

Joining \eqref{****'} with \eqref{***''+starp}, we get

\begin{align}\label{canc}
&\left(\int_{B_{R}}\eta^{2^*}(x)\cdot H^{\frac{2^*}{2}\cdot(p+\gamma)}\left(Du(x)\right)dx\right)^{\frac{2}{2^*}}\notag\\
&\le c\left(\frac{p+\gamma}{4}\right)^2\cdot\int_{B_R}\eta^2(x)\cdot H^{p+\gamma-4}\left(Du(x)\right)\left|D\left(\left|Du(x)\right|^2\right)\right|^2dx\notag\\
&+c\int_{B_{R}}\left|D\eta(x)\right|^2\cdot H^{p+\gamma}\left(Du(x)\right)dx\notag\\
&\le c\left(\frac{p+\gamma}{4}\right)^2\cdot\left[(1+\gamma^2)\int_{B_R}\eta^2(x)g^2(x)H^{p+\gamma}\left(Du(x)\right)dx\right.\notag\\
&\left.\qquad+\int_{B_R}\left|D\eta(x)\right|^2H^{p+\gamma}\left(Du(x)\right)dx\right]\notag\\
&+c\int_{B_{R}}\left|D\eta(x)\right|^2\cdot H^{p+\gamma}\left(Du(x)\right)dx\notag\\
&=c\left(\frac{p+\gamma}{4}\right)^2\cdot(1+\gamma^2)\cdot\int_{B_R}\eta^2(x)g^2(x)H^{p+\gamma}\left(Du(x)\right)dx\notag\\
&+c\left[1+\left(\frac{p+\gamma}{4}\right)^2\right]\cdot\int_{B_{R}}\left|D\eta(x)\right|^2\cdot H^{p+\gamma}\left(Du(x)\right)dx\notag\\
&\le c\left(p+\gamma\right)^4\cdot\int_{B_R}\eta^2(x)g^2(x)H^{p+\gamma}\left(Du(x)\right)dx\notag\\
&+c\left[1+\left(p+\gamma\right)^2\right]\cdot\int_{B_{R}}\left|D\eta(x)\right|^2\cdot H^{p+\gamma}\left(Du(x)\right)dx.
\end{align}

Now, recalling that $g\in L^q_{\mathrm{loc}}(\Omega)$, with $q>n>2$, we can use H\"{o}lder's Inequality with exponents $\left(\frac{q}{2}, \frac{q}{q-2}\right)$, and we infer

\begin{align}\label{g-Holder}
&\int_{B_R}\eta^2(x)g^2(x)H^{p+\gamma}\left(Du(x)\right)dx\notag\\
&\le\left(\int_{B_{R}}g^q(x)dx\right)^\frac{2}{q}\cdot\left(\int_{B_{R}}\eta^{\frac{2q}{q-2}}(x)H^{\frac{q(p+\gamma)}{q-2}}\left(Du(x)\right)dx\right)^\frac{q-2}{q}.
\end{align}

Since $q>n$, $1<\frac{q}{q-2}<\frac{n}{n-2}$, and we can apply the Interpolation Inequality to the function

\begin{equation}\label{h-definition}
\eta^2\cdot H^{p+\gamma}\left(Du\right).
\end{equation}

Let $\theta\in(0, 1)$ such that

\begin{equation}
\frac{q-2}{q}=\theta+\frac{(1-\theta)(n-2)}{n}.
\end{equation}

One can easily check that

\begin{equation}
\theta=\frac{q-n}{q},
\end{equation}

and so

\begin{align}
&\left[\int_{B_{R}}\left(\eta^2(x)\cdot H^{p+\gamma}\left(Du(x)\right)\right)^{\frac{q}{q-2}}dx\right]^\frac{q-2}{q}\notag\\&\le c\left(\int_{B_{R}}\eta^2(x)\cdot H^{p+\gamma}\left(Du(x)\right)dx\right)^\theta\cdot\left(\int_{B_{R}}\left(\eta^2(x)\cdot H^{p+\gamma}\left(Du(x)\right)\right)^{\frac{n}{n-2}}(x)dx\right)^\frac{(1-\theta)(n-2)}{n},
\end{align}

that is

\begin{align}\label{Interpolation}
&\left(\int_{B_{R}}\eta^{\frac{2q}{q-2}}(x)H^{\frac{q(p+\gamma)}{q-2}}\left(Du(x)\right)dx\right)^\frac{q-2}{q}\notag\\
&\le c\left(\int_{B_{R}}\eta^2(x)H^{p+\gamma}\left(Du(x)\right)dx\right)^\theta\cdot \left(\int_{B_{R}}\eta^{2^*}(x)H^{\frac{2^*}{2}\cdot(p+\gamma)}\left(Du(x)\right)dx\right)^{\frac{2(1-\theta)}{2^*}}.
\end{align}

Using \eqref{g-Holder}, \eqref{Interpolation}, and Young's Inequality with exponents $\left(\frac{1}{\theta}, \frac{1}{1-\theta}\right)$, for any $\varepsilon>0$, we can estimate the first term in the right-hand side of \eqref{canc} as follows

\begin{align}\label{Interpolation-Young}
&c\left(p+\gamma\right)^4\int_{B_R}\eta^2(x)g^2(x)H^{p+\gamma}\left(Du(x)\right)dx\notag\\
&\le c(\varepsilon)\left[c\left(p+\gamma\right)^4\left(\int_{B_{R}}g^q(x)dx\right)^\frac{2}{q}\right]^{\frac{1}{\theta}}\cdot\left(\int_{B_{R}}\eta^2(x)H^{p+\gamma}\left(Du(x)\right)dx\right)\notag\\
&+\varepsilon\left(\int_{B_{R}}\eta^{2^*}(x)H^{2^*\cdot\frac{p+\gamma}{2}}\left(Du(x)\right)dx\right)^{\frac{2}{2^*}}.
\end{align}

Now, plugging \eqref{Interpolation-Young} into \eqref{canc}, we get

\begin{align}\label{beforereabsorbing}
&\left(\int_{B_{R}}\eta^{2^*}(x)\cdot H^{\frac{2^*}{2}\cdot(p+\gamma)}\left(Du(x)\right)dx\right)^{\frac{2}{2^*}}\notag\\
&\le c\left(p+\gamma\right)^4\int_{B_R}\eta^2(x)g^2(x)H^{p+\gamma}\left(Du(x)\right)dx\notag\\
&+c\left[1+\left(p+\gamma\right)^2\right]\int_{B_{R}}\left|D\eta(x)\right|^2\cdot H^{p+\gamma}\left(Du(x)\right)dx\notag\\
&\le\varepsilon\left(\int_{B_{R}}\eta^{2^*}(x)H^{2^*\cdot\frac{p+\gamma}{2}}\left(Du(x)\right)dx\right)^{\frac{2}{2^*}}\notag\\
&+c(\varepsilon)\left[c\left(p+\gamma\right)^4\left(\int_{B_{R}}g^q(x)dx\right)^\frac{2}{q}\right]^{\frac{1}{\theta}}\cdot\left(\int_{B_{R}}\eta^2(x)H^{p+\gamma}\left(Du(x)\right)dx\right)\notag\\
&+c\left[1+\left(p+\gamma\right)^2\right]\cdot\int_{B_{R}}\left|D\eta(x)\right|^2\cdot H^{p+\gamma}\left(Du(x)\right)dx
\end{align}

and reabsorbing for a sufficiently small value of $\varepsilon$, and recalling the explicit expression of $\theta$, we get

\begin{align}\label{canc2}
&\left(\int_{B_{R}}\eta^{2^*}(x)H^{\frac{2^*}{2}\cdot(p+\gamma)}\left(Du(x)\right)dx\right)^{\frac{2}{2^*}}\notag\\
&\le c\left[\left(p+\gamma\right)^4\cdot\left(\int_{B_{R}}g^q(x)dx\right)^{\frac{2}{q}}\right]^{\frac{q}{q-n}}\cdot\left(\int_{B_{R}}\eta^2(x)H^{p+\gamma}\left(Du(x)\right)dx\right)\notag\\
&+c\left[1+\left(p+\gamma\right)^2\right]\cdot\left(\int_{B_{R}}\left|D\eta(x)\right|^2H^{p+\gamma}\left(Du(x)\right)dx\right).
\end{align}

For $\gamma=0$, \eqref{canc2} gives

\begin{align}\label{canc3}
&\left(\int_{B_{R}}\eta^{2^*}(x)H^{\frac{2^*}{2}\cdot p}\left(Du(x)\right)dx\right)^{\frac{2}{2^*}}\notag\\
&\le cp^{\frac{4q}{q-n}}\cdot\left(\int_{B_{R}}g^q(x)dx\right)^{\frac{2}{q-n}}\cdot\left(\int_{B_{R}}\eta^2(x)H^{p}\left(Du(x)\right)dx\right)\notag\\
&+c\left(1+p^2\right)\cdot\left(\int_{B_{R}}\left|D\eta(x)\right|^2H^{p}\left(Du(x)\right)dx\right).
\end{align}

Since, by the absolute continuity of the integral, there is $\bar{R}>0$ such that, if $R<\bar{R}$, then

\begin{equation}\label{abs}
\left(\int_{B_{R}}g^q(x)\right)^{\frac{2}{q}}<1,
\end{equation}
	
recalling the properties of $\eta$, we can write

\begin{equation}\label{Iteration0}
\left(\int_{B_{\rho}}H^{p\cdot\frac{2^*}{2}}\left(Du(x)\right)dx\right)^{\frac{2}{2^*}}\le \frac{c\cdot p^{\frac{4q}{q-n}}}{(R-\rho)^2}\int_{B_{R}}H^p\left(Du(x)\right)dx,
\end{equation}

where $c=c(n, N, p, q, L_1, L_2, \Arrowvert g\Arrowvert_{L^q(B_R)})$.

Now we choose $\rho=\frac{R}{2}$ and set

\begin{equation}\label{settingiteration1}
R_0=R,\qquad R_i=\rho+\frac{R-\rho}{2^i}=\frac{R}{2}\left(1+\frac{1}{2^i}\right), \qquad \forall i\in\N
\end{equation}

and

\begin{equation}\label{settiniteration2}
p_0=p,\qquad p_i=\frac{2^*}{2}\cdot p_{i-1}=\left(\frac{2^*}{2}\right)^i\cdot p_0, \qquad \forall i\in\N.
\end{equation}

Observe that the sequence $R_i$ is strictly decreasing, and $p_i$ is strictly increasing. Moreover, as $i\to\infty$, $R_i\to\frac{R}{2}$ and $p_i\to\infty$.

Starting from \eqref{canc3}, we can iterate \eqref{canc2}, thus getting, for every $i\in\N$, since $\frac{2}{2^*}=\frac{p_i}{p_{i+1}}$, and $R_i-R_{i+1}=\frac{R}{2^{i+2}}$,

\begin{align}\label{Iteration1}
&\left(\int_{B_{R_{i+1}}}H^{p_{i+1}}\left(Du(x)\right)dx\right)^{\frac{1}{p_{i+1}}}\le \left[\frac{c\cdot p_i^{\frac{4q}{q-n}}}{\left(R_i-R_{i+1}\right)^2}\right]^\frac{1}{p_i}\left(\int_{B_{R_{i}}}H^{p_{i}}\left(Du(x)\right)dx\right)^{\frac{1}{p_i}}\notag\\
&\le\prod_{k=0}^{i}\left(\left[\frac{cp_k^{\frac{4q}{q-n}}}{\left(R_k-R_{k+1}\right)^2}\right]^\frac{1}{p_k}\right)\cdot\left(\int_{B_{R}}H^{p}\left(Du(x)\right)dx\right)^{\frac{1}{p}}\notag\\
&=\exp\left\lbrace \sum_{k=0}^{i}\left[\frac{1}{p_k}\cdot\log\left(\frac{c\cdot 2^{k+2}p_k^{\frac{4q}{q-n}}}{R^2}\right)\right]\right\rbrace \cdot\left(\int_{B_{R}}H^{p}\left(Du(x)\right)dx\right)^{\frac{1}{p}}.
\end{align}
 
Since the series

\begin{equation}\label{series}
\sum_{k=0}^{\infty}\left\lbrace\frac{1}{p_k}\cdot\log\left(\frac{c\cdot 2^{k+2}p_k^{\frac{4q}{q-n}}}{R^2}\right)\right\rbrace
\end{equation}

converges, we can pass to the limit as $i\to\infty$ in \eqref{Iteration1}, thus obtaining the following estimate

\begin{equation}\label{aprioriLinfty}
\Arrowvert H\left(Du\right)\Arrowvert_{L^\infty\left(B_{\frac{R}{2}}\right)}\le c \Arrowvert H\left(Du\right)\Arrowvert_{L^{p}\left(B_{R}\right)}
\end{equation}

where $c=c\left(n, N, p, q, L_1, L_2, \Arrowvert g\Arrowvert_{L^q(B_R)}\right)$, i.e. \eqref{aprioriLinftyclaim}.\\

Moreover, by \eqref{starp} and \eqref{***''} for $\gamma=0$, we get

\begin{align}\label{L2estimate1}
&\int_{B_{R}}\eta^2(x)\left|D\left(H^{\frac{p}{2}}\left(Du(x)\right)\right)\right|^2dx\le\notag\\
&\le c\cdot p^2\left[\int_{B_R}\eta^2(x)g^2(x)H^{p}\left(Du(x)\right)dx+c\int_{B_R}\left|D\eta(x)\right|^2H^{p}\left(Du(x)\right)dx\right].
\end{align}

Using \eqref{g-Holder} and \eqref{Interpolation} again, with the same value of $\theta$, for $\gamma=0$, \eqref{L2estimate1} becomes

\begin{align}\label{L2estimate2}
&\int_{B_{R}}\eta^2(x)\left|D\left(H^{\frac{p}{2}}\left(Du(x)\right)\right)\right|^2dx\le c\cdot p^2\left(\int_{B_{R}}\eta^2(x)H^{p}\left(Du(x)\right)dx\right)^\theta\notag\\&\qquad\cdot \left(\int_{B_{R}}\eta^{2^*}(x)H^{\frac{2^*}{2}\cdot p}\left(Du(x)\right)dx\right)^{\frac{2(1-\theta)}{2^*}}\left(\int_{B_{R}}g^q(x)dx\right)^\frac{2}{q}\notag\\
&+c\cdot p^2\int_{B_R}\left|D\eta(x)\right|^2H^{p}\left(Du(x)\right)dx,
\end{align}

and now we use Young's Inequality with exponents $\left(\frac{1}{\theta}, \frac{1}{1-\theta}\right)$, thus obtaining

\begin{align}\label{L2estimate3}
&\int_{B_{R}}\eta^2(x)\left|D\left(H^{\frac{p}{2}}\left(Du(x)\right)\right)\right|^2dx\le cp^2\left[\left(\int_{B_{R}}\eta^2(x)H^{p}\left(Du(x)\right)dx\right)\right.\notag\\&\left.\cdot\left(\int_{B_{R}}g^q(x)dx\right)^\frac{2}{q\theta}+ \left(\int_{B_{R}}\eta^{2^*}(x)H^{p\cdot\frac{2^*}{2}}\left(Du(x)\right)dx\right)^{\frac{2}{2^*}}\right.\notag\\
&\left.+\int_{B_R}\left|D\eta(x)\right|^2H^{p}\left(Du(x)\right)dx\right],
\end{align}

and, by \eqref{canc2} with $\gamma=0$, 

\begin{align}\label{L2estimate4}
&\int_{B_{R}}\eta^2(x)\left|D\left(H^{\frac{p}{2}}\left(Du(x)\right)\right)\right|^2dx\le cp^{\frac{4q}{q-n}}\cdot\left(\int_{B_{R}}g^q(x)dx\right)^{\frac{2}{q-n}}\notag\\&\cdot\left(\int_{B_{R}}\eta^2(x)H^{p}\left(Du(x)\right)dx\right)+c\left(1+p^2\right)\cdot\left(\int_{B_{R}}\left|D\eta(x)\right|^2H^{p}\left(Du(x)\right)dx\right),
\end{align}

where we used that $\theta=\frac{q-n}{q}.$\\
Recalling the properties of $\eta$, and choosing $R$ such that \eqref{abs} holds true, and $\rho=\frac{R}{2}$, we can estimate the $L^2-$norm of the gradient of $H^\frac{p}{2}\left(Du\right)$ as follows

\begin{align}\label{L2estimate}
&\int_{B_{\frac{R}{2}}}\left|D\left(H^{\frac{p}{2}}\left(Du(x)\right)\right)\right|^2dx\le \frac{c}{R^2}\int_{B_{R}}H^{p}\left(Du(x)\right)dx,
\end{align}

where $c=c\left(n, N, p, q, L_1, L_2, \Arrowvert g\Arrowvert_{L^q(B_R)}\right)$.
Since $p<2$, we also have, by H\"{o}lder's Inequality,

\begin{align}\label{L2D2uestimate1}
&\int_{B_{\frac{R}{2}}}\left|D^2u(x)\right|^pdx=\int_{B_{\frac{R}{2}}}\left|D^2u(x)\right|^pH^{\frac{p\cdot(p-2)}{2}}\left(Du(x)\right)\cdot H^{\frac{p\cdot(2-p)}{2}}\left(Du(x)\right)dx\notag\\
&\le\left(\int_{B_{\frac{R}{2}}}\left|D^2u(x)\right|^2H^{p-2}\left(Du(x)\right)dx\right)^\frac{p}{2}\cdot\left(\int_{B_{R}}H^p\left(Du(x)\right)dx\right)^{\frac{2-p}{2}}.
\end{align}

Now we estimate the first integral in the right-hand side of \eqref{L2D2uestimate1} using \eqref{2.17bis} with $\gamma=0$ and \eqref{Iteration0}, so we get

\begin{equation}\label{L2D2uestimate}
\int_{B_{\frac{R}{2}}}\left|D^2u(x)\right|^pdx\le c\cdot\left(\int_{B_{R}}H^p\left(Du(x)\right)dx\right),
\end{equation}

i.e. \eqref{LpD2uestimateclaimqgrn}.
\end{proof}

\subsection{The case $q=n$: proof of Lemma \ref{aprioriqeqn}.}
\begin{proof}[Proof of Lemma \ref{aprioriqeqn}]
Notice that, in this case, we are weakening the assumption on $g$, since $g\in L^n_{\mathrm{loc}}(\Omega)$. As in the previous section, $u$ is a local minimizer of the functional  \eqref{functional}, and we assume $u\in W^{2,2}_{\mathrm{loc}}(\Omega)\cap W^{1, \infty}_{\mathrm{loc}}(\Omega)$.\\
First of all, we find an estimate for the  $L^r-$norm of $H(Du)$, for any $1<r<\infty$, proving \eqref{Lrestimate-claim}.\\
We can argue exactly as the previous case until the estimate \eqref{canc}. In order to estimate the integral \eqref{criticalintegral}, we use H\"{o}lder's Inequality with exponents $\left(\frac{n}{2}, \frac{n}{n-2}\right)$, as follows
\begin{align}\label{g-Holder-n}
&\int_{B_R}\eta^2(x)g^2(x)H^{p+\gamma}\left(Du(x)\right)dx\notag\\
&\le\left(\int_{B_{R}}g^n(x)dx\right)^\frac{2}{n}\cdot\left(\int_{B_{R}}\eta^{2^*}(x)H^{2^*\cdot\frac{(p+\gamma)}{2}}\left(Du(x)\right)dx\right)^\frac{2}{2^*}.
\end{align}

Plugging \eqref{g-Holder-n} into \eqref{canc}, we have

\begin{align}\label{canc-n}
&\left(\int_{B_{R}}\eta^{2^*}(x)\cdot H^{\frac{2^*}{2}\cdot(p+\gamma)}\left(Du(x)\right)dx\right)^{\frac{2}{2^*}}\notag\\
&\le c\left(p+\gamma\right)^4\left(\int_{B_{R}}g^n(x)dx\right)^\frac{2}{n}\cdot\left(\int_{B_{R}}\eta^{2^*}(x)H^{\frac{2^*}{2}\cdot(p+\gamma)}\left(Du(x)\right)dx\right)^\frac{2}{2^*}\notag\\
&+c\left[1+\left(\frac{p+\gamma}{4}\right)^2\right]\int_{B_{R}}\left|D\eta(x)\right|^2\cdot H^{p+\gamma}\left(Du(x)\right)dx.
\end{align}

In order to reabsorb the first term on the right-hand side of \eqref{canc-n}, we have to use the absolute continuity of the integral and take $R<R_{\gamma}$, with $R_\gamma$ such that

\begin{equation}\label{abscont}
\left(\int_{B_{R_\gamma}}g^n(x)dx\right)^\frac{2}{n}<\frac{1}{c\left(p+\gamma\right)^4}.
\end{equation}

Observe that, if $\gamma\to\infty$, then $R_\gamma\to0$,
and so, even if we can still use Moser's Iterative technique, we cannot pass to the limit.\\
More precisely, if $R<R_\gamma$, plugging \eqref{abscont} into \eqref{canc-n}, we can reabsorb the first term of the right-hand side of \eqref{canc-n} to the left-hand side, thus getting

\begin{align}\label{canc-n2}
&\left(\int_{B_{R}}\eta^{2^*}(x)\cdot H^{\frac{2^*}{2}\cdot(p+\gamma)}\left(Du(x)\right)dx\right)^{\frac{2}{2^*}}\notag\\
&\le c\left[1+\left(\frac{p+\gamma}{4}\right)^2\right]\int_{B_{R}}\left|D\eta(x)\right|^2\cdot H^{p+\gamma}\left(Du(x)\right)dx,
\end{align}

and by the properties of $\eta$, for $\gamma=0$ we get

\begin{equation}\label{canc-n3}
\left(\int_{B_{\rho}}H^{\frac{2^*}{2}\cdot p}\left(Du(x)\right)dx\right)^{\frac{2}{2^*}}\le \frac{cp^2}{(R-\rho)^2}\int_{B_{R}}H^{p}\left(Du(x)\right)dx.
\end{equation}

Choosing $\rho=\frac{R}{2}$, by the same iterative method used in the previous proof, recalling \eqref{settingiteration1} and \eqref{settiniteration2}, we get

\begin{align}\label{Iteration-n}
&\left(\int_{B_{R_{i+1}}}H^{p_{i+1}}\left(Du(x)\right)dx\right)^{\frac{1}{p_{i+1}}}\le \left[\frac{c\cdot p_i^{2}}{\left(R_i-R_{i+1}\right)^2}\right]^\frac{1}{p_i}\left(\int_{B_{R_{i}}}H^{p_{i}}\left(Du(x)\right)dx\right)^{\frac{1}{p_i}}\notag\\
&\le\prod_{k=0}^{i}\left(\left[\frac{cp_k^{2}}{\left(R_k-R_{k+1}\right)^2}\right]^\frac{1}{p_k}\right)\cdot\left(\int_{B_{R}}H^{p}\left(Du(x)\right)dx\right)^{\frac{1}{p}}\notag\\
&=\exp\left\lbrace \sum_{k=0}^{i}\left[\frac{1}{p_k}\cdot\log\left(\frac{c\cdot 2^{k+2}p_k^{2}}{R^2}\right)\right]\right\rbrace \cdot\left(\int_{B_{R}}H^{p}\left(Du(x)\right)dx\right)^{\frac{1}{p}},
\end{align}

and since the estimate \eqref{Iteration-n} holds true for every $i\in\N$, and $p_i\to\infty$ as $i\to\infty$, we can estimate the $L^r$ norm of $H(Du)$ for every $1<r<\infty.$ More precisely, for any finite $r$, there is $i\in\N$ such that $p_i>r$, so we have, for a constant $c_1=c_1(r, p, n)$

\begin{align}\label{Lrestimate}
&\left(\int_{B_{\frac{R}{2}}}H^{r}\left(Du(x)\right)dx\right)^{\frac{1}{r}}\le\left(\int_{B_{R_{i+1}}}H^{r}\left(Du(x)\right)dx\right)^{\frac{1}{r}}\notag\\
&\le c_1\left(\int_{B_{R_{i+1}}}H^{p_{i+1}}\left(Du(x)\right)dx\right)^{\frac{1}{p_{i+1}}}\notag\\
&\le c_1\cdot\exp\left\lbrace \sum_{k=0}^{i}\left[\frac{1}{p_k}\cdot\log\left(\frac{c\cdot 2^{k+2}p_k^{2}}{R^2}\right)\right]\right\rbrace \cdot\left(\int_{B_{R}}H^{p}\left(Du(x)\right)dx\right)^{\frac{1}{p}},
\end{align}
and we get \eqref{Lrestimate-claim}.\\

Let us prove, now, estimate \eqref{LpD2uestimateclaimqeqn}. Recalling \eqref{L2estimate1}, using \eqref{g-Holder-n} with $\gamma=0$, we get

\begin{align}\label{L2estimate1-n}
&\int_{B_{R}}\eta^2(x)\left|D\left(H^{\frac{p}{2}}\left(Du(x)\right)\right)\right|^2dx\le c\cdot\left(\frac{p^2}{4}\right)\left[\left(\int_{B_{R}}g^n(x)dx\right)^\frac{2}{n}\right.\notag\\
&\left.\cdot\left(\int_{B_{R}}\eta^{2^*}(x)H^{2^*\cdot\frac{p}{2}}\left(Du(x)\right)dx\right)^\frac{2}{2^*}+c\int_{B_R}\left|D\eta(x)\right|^2H^{p}\left(Du(x)\right)dx\right],
\end{align}

and recalling the properties of $\eta$, with $\rho=\frac{R}{2}$, by \eqref{canc-n3}, we obtain

\begin{align}\label{L2estimate2-n}
&\int_{B_{\frac{R}{2}}}\left|D\left(H^{\frac{p}{2}}\left(Du(x)\right)\right)\right|^2dx\le \frac{cp^2}{R^2}\cdot\left[p^2\cdot\left(\int_{B_{R}}g^n(x)dx\right)^\frac{2}{n}+1\right]\notag\\&\cdot\int_{B_R}H^{p}\left(Du(x)\right)dx.
\end{align}

therefore, using \eqref{abscont}, with $\gamma=0$, we get 

\begin{equation}\label{L2estimate-n}
\int_{B_{\frac{R}{2}}}\left|D\left(H^{\frac{p}{2}}\left(Du(x)\right)\right)\right|^2dx\le \frac{c}{R^2}\cdot\int_{B_R}H^{p}\left(Du(x)\right)dx,
\end{equation}

that is the same a priori estimate as \eqref{L2estimate} under weaker assumption on the coefficients.\\
In a way very similar to \eqref{L2D2uestimate1}, by \eqref{L2estimate-n} using \eqref{2.17bis} and \eqref{canc-n3}, we get the same estimate for the $L^p-$ norm of the second derivatives of $u$, thus getting \eqref{LpD2uestimateclaimqeqn}.
\end{proof} 

\section{Regularity results by approximation: proofs of Theorem \ref{qgrn} and Theorem \ref{qeqn}.}

The aim of this section is to prove that the a priori estimates proved in the section \ref{A priori section} are preserved in passing to the limit in a sequence of minimizers of a suitable approximating problem, and this allows us to prove Theorem \ref{qgrn} and Theorem \ref{qeqn}.

\begin{proof}[Proof of Theorem \ref{qgrn}]
Let us consider a function $\phi\in C^{\infty}_0(B_1(0))$ such that $0\le\phi\le1$ and $\int_{B_1(0)}\phi(x)dx=1$, and, for all $\varepsilon>0$, a standard family of mollifiers $\set{\phi_\varepsilon}_\varepsilon$  defined as follows
	
	\begin{equation*}
		\phi_\varepsilon(x)=\frac{1}{\varepsilon^n}\phi\left(\frac{x}{\varepsilon}\right),
	\end{equation*}
	
	so that, for all $\varepsilon>0$, $\phi_\varepsilon\in C^{\infty}_0(B_\varepsilon(0))$, $0\le\phi_\varepsilon\le1$, $\int_{B_\varepsilon(0)}\phi_\varepsilon(x)dx=1.$
	
	It is well known that, for any $h\in L^1_{\mathrm{loc}}(\Omega)$,  with $d(\text{supp }(h), \partial\Omega)>\varepsilon$ setting 
	\begin{equation*}
		h_\varepsilon(x)=h\ast\phi_\varepsilon(x)=\int_{B_\varepsilon}\phi_\varepsilon(y)h(x+y)dy=\int_{B_1}\phi(\omega)h(x+\varepsilon\omega)d\omega,
	\end{equation*}
	we have $h_\varepsilon\in C^\infty(\Omega)$.
	
	Fix $x_0\in\Omega$, $0<R<d(x_0, \partial\Omega)$, and denote $B_R(x_0)=B_R$. Let us consider the following functional
	
	\begin{equation}\label{approxfunctional}
		\mathcal{F_\varepsilon}(v, B_R)=\int_{B_R}f_\varepsilon\left(x, Dv(x)\right)dx,
	\end{equation}
	
	that is 
	
	\begin{equation*}
		\mathcal{F_\varepsilon}(v, B_R)=\int_{B_R}\left(\int_{B_1}f(x+\varepsilon\omega, Dv(x+\varepsilon\omega))\cdot\phi(\omega)d\omega\right)dx.
	\end{equation*}
	
	Let $u\in W^{1, p}_{\mathrm{loc}}$ a local minimizer of the functional \eqref{functional}, and, for each admissible $\varepsilon>0$, let $v_\varepsilon\in W^{1, p}_{\mathrm{loc}}(B_R)$ the unique local minimizer of the functional \eqref{approxfunctional} such that $v_\varepsilon-u\in W^{1, p}_0(B_R)$.\\
	It's known that $v_\varepsilon\in W^{2, 2}_{\mathrm{loc}}\left(B_R\right)\cap W^{1,\infty}_{\mathrm{loc}}\left(B_R\right)$. It's easy to check that from \eqref{f1}, \eqref{f4} and \eqref{SecondDerivativeGrowth}, the following properties hold for the funcion $f_\varepsilon$:
	
	\begin{equation}\label{f'1}
		L_1(\mu^2+|\xi|^2)^\frac{p}{2}\le f_\varepsilon(x, \xi)\le L_2(\mu^2+|\xi|^2)^\frac{p}{2},
	\end{equation}
	
	\begin{equation}\label{f'4}
		\left|D_x\left(D_\xi f_\varepsilon(x, \xi)\right)\right|\le g_\varepsilon(x)\left(\mu^2+|\xi|^2\right)^\frac{p-1}{2}
	\end{equation} 
	
	\begin{equation}\label{f'2}
	c_1\left(\mu^2+\left|\xi\right|^2\right)^\frac{p-2}{2}\left|\eta\right|^2\le\left<D_{\xi\xi}f_\varepsilon(x, \xi)\eta, \eta\right>\le c_2\left(\mu^2+\left|\xi\right|^2\right)^\frac{p-2}{2}\left|\eta\right|^2,
	\end{equation}

	for all $\xi, \eta\in\R^{N\times n}$, and for almost every $x\in \Omega_{\varepsilon},$ and where $g_\varepsilon=g\ast\phi_\varepsilon.$\\	
	By the growth condition \eqref{f'1}, and the minimality of $v_\varepsilon$, it follows

		\begin{align}\label{+}
			L_1\int_{B_R}\left(\mu^2+\left|Dv_\varepsilon(x)\right|^2\right)^\frac{p}{2}dx&\le\int_{B_R}f_\varepsilon\left(x, Dv_\varepsilon(x)\right)dx\le\int_{B_R}f_\varepsilon\left(x, Du(x)\right)dx\notag\\&\le L_2\int_{B_R}\left(\mu^2+\left|Du(x)\right|^2\right)^\frac{p}{2}dx.
		\end{align}

	Since $u\in W^{1,p}(B_R)$, the sequence $\{v_\varepsilon\}_\varepsilon$ is bounded in $W^{1,p}(B_R)$ and so there is a function $v\in W^{1,p}(B_R)$ such that $v_\varepsilon\rightharpoonup v$ in $W^{1,p}(B_R)$. But since $v_\varepsilon\in W^{2, 2}(B_R)\cap W^{1, \infty}(B_R)$, we can use the estimates \eqref{L2estimate} and \eqref{L2D2uestimate} , and then \eqref{+}, thus getting
	
	\begin{equation}\label{2.51epsilon}
		\int_{B_{\frac{R}{2}}}\left|D\left(H^{\frac{p}{2}}\left(Dv_\varepsilon(x)\right)\right)\right|^2dx\le \frac{c}{R^2}\int_{B_{R}}H^{p}\left(Du(x)\right)dx
	\end{equation}
	
	and, by Lemma \ref{aprioriqgrn},
		
	\begin{equation}\label{2.52epsilon}
	\int_{B_{\frac{R}{2}}}\left|D^2v_\varepsilon(x)\right|^pdx\le c\cdot\left(\int_{B_{R}}H^p\left(Dv_\varepsilon(x)\right)dx\right),
	\end{equation}
	
	with a constant depending on $\Arrowvert g_\varepsilon\Arrowvert_{L^{q}(B_{R})}$.\\
	Let's notice that the function $g_\varepsilon$ strongly converges to $g$ in $L^q$, and we have 
	
	$$\Arrowvert g_\varepsilon \Arrowvert_{L^{q}(B_{R})}\le M \Arrowvert g\Arrowvert_{L^{q}(B_{R})},$$ 
	
	and so \eqref{2.51epsilon} and \eqref{2.52epsilon} hold true with a constant independent of $\varepsilon$.
	So  $\left\lbrace H^\frac{p}{2}(Dv_\varepsilon)\right\rbrace _\varepsilon$ is bounded in $W^{1, 2}(B_R)$, and $\left\lbrace v_\varepsilon\right\rbrace_\varepsilon$ is bounded in $W^{2, p}(B_R)$. Then there exists a function $w\in W^{1,2}_{\mathrm{loc}}(B_R)$,  such that $H^\frac{p}{2}(Dv_\varepsilon)$ weakly converges to $w$ in $W^{1, 2}_{\mathrm{loc}}(B_R)$ as $\varepsilon$ goes to 0, so that $H^\frac{p}{2}(Dv_\varepsilon)\to w$ in $L^{2}(B_R)$ strongly, and, up to a subsequence, almost everywhere. Since, by \eqref{2.52epsilon}, $\left\lbrace v_\varepsilon\right\rbrace_\varepsilon$, is bounded in $W^{2, p}(B_R)$, then, up to a subsequence, $v_\varepsilon\rightharpoonup v$ in $W^{2, p}(B_R)$, so $v_\varepsilon\to v$ strongly in $W^{1, p}(B_R)$.\\
	Moreover, since the function $H^\frac{p}{2}$ is continuous, we get
	
	\begin{equation}\label{+''}
	w=H^\frac{p}{2}\left(Dv\right)
	\end{equation}
	
	almost everywhere, and by \eqref{2.51epsilon}, we get
	
	\begin{equation}\label{+'''}
	\int_{B_{\frac{R}{2}}}\left|D\left(H^{\frac{p}{2}}\left(Dv(x)\right)\right)\right|^2dx\le \frac{c}{R^2}\int_{B_{R}}H^{p}\left(Du(x)\right)dx
	\end{equation}
	
	and by \eqref{2.52epsilon}
	
	\begin{equation}\label{+''''}
	\int_{B_{\frac{R}{2}}}\left|D^2v(x)\right|^pdx\le c\cdot\left(\int_{B_{R}}H^p\left(Dv(x)\right)dx\right).
	\end{equation}
	
	Now we want to prove that $u=v$ almost everywhere. Using the minimizing property of $u$ for $\mathcal{F}$, Fatou's Lemma, the lower semi-continuity of $\mathcal{F_\varepsilon}$ (due to the convexity of $f_\varepsilon$), and the fact that $v_\varepsilon$ is the minimizer of $\mathcal{F_\varepsilon}$ with boundary value $u$ on $B_R$, we have

		\begin{align}\label{++}
			&\int_{B_R}f\left(x, Du(x)\right)dx\le\int_{B_R}f\left(x, Dv(x)\right)dx\le\liminf_\varepsilon\int_{B_R}f_\varepsilon\left(x, Dv(x)\right)dx\notag\\&\le\liminf_\varepsilon\int_{B_R}f_\varepsilon\left(x, Dv_\varepsilon(x)\right)dx\le\liminf_\varepsilon\int_{B_R}f_\varepsilon\left(x, Du(x)\right)dx=\int_{B_R}f\left(x, Du(x)\right)dx.
		\end{align}

	So all the terms of \eqref{++} are equal, and in particular
	
	\begin{equation*}
		\int_{B_R}f\left(x, Du(x)\right)dx=\int_{B_R}f\left(x, Dv(x)\right)dx.
	\end{equation*}
	
	By virtue of the strict convexity of the functional \eqref{functional}, the local minimizer with boundary value $u$, is unique, so $u=v$ almost everywhere and $u\in W^{2,p}(B_R)$.\\
	By \eqref{+''''}, we also obtain the following estimate
	
	\begin{equation}\label{D^2uestimate}
	\int_{B_{\frac{R}{2}}}\left|D^2u(x)\right|^2dx\le c\cdot\left(\int_{B_{R}}H^p\left(Du(x)\right)dx\right),
	\end{equation}

that is \eqref{LpD2uestimateclaimqgrnthm}.\\
Now, applying \eqref{aprioriLinftyclaim} to $v_\varepsilon$ and recalling \eqref{+}, we get
	
	\begin{equation}
	\Arrowvert H\left(Dv_\varepsilon\right)\Arrowvert_{L^\infty\left(B_{\frac{R}{2}}\right)}\le c \Arrowvert H\left(Dv_\varepsilon\right)\Arrowvert_{L^{p}\left(B_{R}\right)}\le c \Arrowvert H\left(Du\right)\Arrowvert_{L^{p}\left(B_{R}\right)},
	\end{equation}
	
	and so there is a function $\bar{w}\in W^{1, \infty}(B_R)$ such that $H\left(Dv_\varepsilon\right)\rightharpoonup\bar{w}$ in $W^{1, \infty}(B_R)$, so $H\left(Dv_\varepsilon\right)\to\bar{w}$ in $L^{\infty}(B_R)$, and, as before, by the continuity of $H$, we get $\bar{w}=H(Dv)=H(Du)$.
	By the lower semicontinuity of the $W^{1,\infty}-$norm, we get
	
	\begin{align}\label{Linftyestimate}
	&\Arrowvert  H\left(Du\right)\Arrowvert_{L^\infty\left(B_{\frac{R}{2}}\right)}\le\liminf_\varepsilon\Arrowvert  H\left(Dv_\varepsilon\right)\Arrowvert_{L^\infty\left(B_{\frac{R}{2}}\right)}\notag\\
	&\le c\cdot\liminf_\varepsilon\Arrowvert H\left(Dv_\varepsilon\Arrowvert_{L^{p}\left(B_{R}\right)}\right)\le c \Arrowvert H\left(Du\right)\Arrowvert_{L^{p}\left(B_{R}\right)},
	\end{align}

so we have $H(Du)\in L^\infty_{\mathrm{loc}}(\Omega)$, with the estimate \eqref{Linftyclaim}.
\end{proof}

\begin{proof}[Proof of Theorem \ref{qeqn}]                
In order to prove Theorem \ref{qeqn}, let us observe that, by the same arguments given above, we immediately obtain that $u\in W^{2, p}_{\mathrm{loc}}(\Omega)$, with the estimate \eqref{LpD2uestimateclaimqeqnthm}.\\
To prove the remaining part of the theorem, for $1<r<\infty$, using \eqref{Lrestimate} and \eqref{+}, we have 

\begin{align}\label{Lrestimateregular}
\Arrowvert H\left(Dv_\varepsilon\right)\Arrowvert_{L^r\left(B_{\frac{R}{2}}\right)}\le c \Arrowvert H\left(Dv_\varepsilon\right)\Arrowvert_{L^{p}\left(B_{R}\right)}\le c \Arrowvert H\left(Du\right)\Arrowvert_{L^{p}\left(B_{R}\right)}.
\end{align}

Arguing similarly to how we did for \eqref{Linftyestimate}, we get

\begin{align}\label{LRestimate}
&\Arrowvert  H\left(Du\right)\Arrowvert_{L^r\left(B_{\frac{R}{2}}\right)}\le\liminf_\varepsilon\Arrowvert  H\left(Dv_\varepsilon\right)\Arrowvert_{L^r\left(B_{\frac{R}{2}}\right)}\notag\\
&\le c\cdot\liminf_\varepsilon\Arrowvert H\left(Dv_\varepsilon\Arrowvert_{L^{p}\left(B_{R}\right)}\right)\le c \Arrowvert H\left(Du\right)\Arrowvert_{L^{p}\left(B_{R}\right)}.
\end{align}

So $H(Du)\in L^r_{\mathrm{loc}}(\Omega),$ and estimate \eqref{Lrestimate-claimthm} holds, for every $r\in(1, \infty).$
\end{proof}

	{Andrea Gentile
	\\
	Dipartimento di Matematica e Applicazioni ``R. Caccioppoli''
	\\
	Universit\`a degli Studi di Napoli ``Federico II''\\
	Via Cintia, 80126, Napoli (Italy)
	\\andrea.gentile@unina.it}

\begin{thebibliography}{References}
\bibitem{AF}E. Acerbi and N. Fusco, \textit{Regularity for minimizers of nonquadratic functionals: The case 1 < p < 2}, J. Math. Anal. Appl. 140
(1989), no. 1, 115--135.
\bibitem{2} A. L. Baisón, A. Clop, R. Giova, J. Orobitg and A. Passarelli di Napoli, \textit{Fractional differentiability for solutions of nonlinear
	elliptic equations}, Potential Anal. 46 (2017), no. 3, 403-430.

\bibitem{5} M. Carozza, J. Kristensen and A. Passarelli di Napoli, \textit{Higher differentiability of minimizers of convex variational integrals},
Ann. Inst. H. Poincaré Anal. Non Linéaire 28 (2011), no. 3, 395-411.
\bibitem{6} A. Clop, D. Faraco, J. Mateu, J. Orobitg and X. Zhong, \textit{Beltrami equations with coefficient in the Sobolev space $W^{1,p}$}, Publ.
Mat. 53 (2009), no. 1, 197-230.
\bibitem{CGP} A. Clop, R. Giova,  and A. Passarelli di Napoli, \textit{Besov regularity for solutions of p-harmonic equations}, Adv. Nonlinear   Anal.  (2017),
DOI: 10.1515/anona-2017-0030
\bibitem{CuMR} D. Cruz-Uribe, K. Moen and S. Rodney, \textit{Regularity results for weak solutions of elliptic PDEs below the natural exponent},
Ann. Mat. Pura Appl. (4) 195 (2016), no. 3, 725-740.
\bibitem{CGGP2} G. Cupini, F. Giannetti, R. Giova, A. Passarelli di Napoli {\em Higher integrability for minimizers of asymptotically convex integrals with discontinuous coefficients}, Nonlinear Anal. 154 (2017), 7--24.
\bibitem{DeGiorgi}E. De Giorgi, {\em Un esempio di estremali discontinue per un problema variazionale di tipo ellittico}, Boll. Unione Mat. Ital. (4) Ser. 1 (1968) 135--137.
	\bibitem{diestrver09}
L. Diening, B. Stroffolini, and A. Verde.
\newblock {\em Everywhere regularity of functionals with $\phi$-growth}
\newblock  Manu. Math., 129 (2009), 449- 481 	\bibitem{diestrver11}
L. Diening, B. Stroffolini, and A. Verde. {\em Lipschitz regularity for some asymptotically convex problems.} ESAIM Control Optim. Calc. Var., 17(1) (2011), 178--189.
\bibitem{EleMarMas} {M. Eleuteri, P. Marcellini, E. Mascolo:}\emph{ Lipschitz estimates for systems with ellipticity conditions at infinity,}
Ann. Mat. Pura e Appl. (4),  195 (2016)   1575-1603.
\bibitem{EleMarMas2} {M. Eleuteri, P. Marcellini, E. Mascolo:}
{\em Lipschitz continuity for energy integrals with variable exponents.}  Atti Accad. Naz. Lincei Rend. Lincei Mat. Appl. 27 (2016)   61-87.
\bibitem{fh} {N. Fusco and J. E. Hutchinson}, {\em $C^{1,\alpha}$ partial of function minimizing  quasiconvex  integrals.} 
Manuscripta Math. {\bf 54} (1985),  121--143.
\bibitem{FuscoSbordone} N. Fusco and C. Sbordone:
{\em Higher integrability of the gradient of minimizers of functionals with
	nonstandard growth conditions.}
Comm. Pure Appl. Math. 43 (1990), no. 5, 673--683.
\bibitem{Gentile} {A. Gentile}, {\em Regularity for minimizers of non-autonomous non-quadratic functionals in the case $1<p<2$: an a priori estimate}, Rend. Acc. Sc. fis. mat. Napoli, Vol LXXXV (2018) 185--200.
\bibitem{19} M. Giaquinta, \textit{Multiple Integrals in the Calculus of Variations and Nonlinear Elliptic Systems}, Ann. of Math. Stud. 105,
Princeton University Press, Princeton, 1983.

\bibitem{gm} {M. Giaquinta and G. Modica}. {\em Remarks on the regularity of the minimizers of certain degenerate functionals. } Manu. Math. {\bf 57}  (1986),  55-99.
\bibitem{GP} R.Giova, A. Passarelli Di Napoli, \textit{Regularity results for a priori bounded minimizers of non-autonomous functionals with discontinuous coefficients}, Adv. Calc. Var. (2017). 
\bibitem{Gio} R. Giova, \textit{Higher differentiability for n-harmonic systems with Sobolev coefficients}, J. Differential Equations 259 (2015),
no. 11, 5667--5687.
\bibitem{23} E. Giusti, \textit{Direct Methods in the Calculus of Variations}, World Scientific Publishing, River Edge, 2003.

\bibitem{Iwaniec-Sbordone} T. Iwaniec, C. Sbordone, {\em Riesz transforms and elliptic PDEs with VMO coefficients.} J. Anal. Math. 74 (1998), 183--212.

\bibitem{Kinnunen-Zhou} J. Kinnunen, S. Zhou {\em A local estimate for nonlinear equations with discontinuous coefficients.} Comm. Partial Differential Equations 24 (1999), no. 11--12, 2043--2068.

\bibitem{KristensenMingione} J. Kristensen and G. Mingione, {\em Boundary Regularity in Variational Problems}, Arch. Rational Mech. Anal. 198 (2010) 369--455.
\bibitem{KuusiMingione} T. Kuusi and G. Mingione, {\em Universal potential estimates}, Journal of Functional Analysis 262 (2012) 4205-4269.
\bibitem{LeonettiMascoloSiepe} F. Leonetti, E. Mascolo and F. Siepe {\em Everywhere regularity for a class of vectorial functional under subquadreatic general growth conditions}, J. Math. Anal. Appl. 287 (2003), 593--608.
\bibitem{32} A. Passarelli di Napoli, {\em Higher differentiability of minimizers of variational integrals with Sobolev coefficients}, Adv. Calc. Var. 7 (2014), no. 1, 59--89.
\bibitem{33} A. Passarelli di Napoli, {\em Higher differentiability of solutions of elliptic systems with Sobolev coefficients: The case
	$p = n = 2$}, Potential Anal. 41 (2014), no. 3, 715--735.
\bibitem{Sverak-Yan} V. Sveràk, X. Yan, {\em Non-Lipschitz minimizers of smooth uniformly convex functionals}, Proc. Natl. Acad. Sci. USA 99(24) (2002) 15269--15276.
\end{thebibliography}
\end{document}